\newtheorem{thm}{Theorem}[section]
\newtheorem{cor}[thm]{Corollary}
\newtheorem{defn}[thm]{Definition}
\newtheorem{ppty}{Property}
\newtheorem{lem}[thm]{Lemma}
\newtheorem{prop}[thm]{Proposition}
\theoremstyle{definition}
\newtheorem{rem}{Remark}
\newtheorem{exmpl}[thm]{Example}
\newcommand{\qh}{\widehat{\mathbb Q}}
\newcommand{\F}{\mathcal F}
\newcommand{\calD}{\mathcal D}
\newcommand{\calQ}{\mathcal Q}
\newcommand{\Z}{\mathbb Z}
\newcommand{\C}{\mathbb C}
\newcommand{\Q}{\mathbb Q}
\newcommand{\q}{\mathbb Q}
\newcommand{\whq}{\widehat{\q}}
\newcommand{\R}{\mathbb R}
\newcommand{\co}{\!:}
\title{Farey Recursive Functions}
\author{Eric Chesebro, Cory Emlen, Kenton Ke, Denise LaFontaine, Kelly McKinnie, Catherine Rigby}
\date{}							
\begin{document}
\maketitle

\section{Introduction}

A {\em second order linear recurrence relation} is an expression of the form
\[ y_{n+1} = ay_{n-1}+by_{n}\]
where the $y_j$'s are indeterminants and $a$ and $b$ are numbers.   If we take $a=b=1$, $y_0=0$, and $y_1=1$, then the sequence of numbers $\{ y_j \}_0^\infty$ which satisfies this relation is the well known sequence of {\em Fibonacci numbers}
\[ 0, 1, 1, 2, 3, 5, 8, 13, \ldots \]

Second order linear recurrence relations are prominent throughout mathematics and appear in surprising and diverse problems.  There are also many generalizations.  One possibility is to allow $a, b$, and the $y_j$'s to be polynomials.  For instance, the {\em Fibonacci polynomials} are defined by setting $y_0=0$, $y_1 = 1$, as with the first two Fibonacci numbers, and insisting that the remainder satisfy the recurrence relation
\[ y_{n+1} = y_{n-1}+xy_n.\]
The first few Fibonacci polynomials are 
\[0,1,x,x^2+1,x^3+2x, x^4+3x^2+1,\ldots\]
Evidently, when the Fibonacci polynomials are evaluated at $x=1$, the result is the Fibonacci numbers.  The Fibonacci polynomials share many interesting identities with the Fibonacci numbers (see e.g., \cite[Ch.9]{benjamin})  and just as the Fibonacci numbers solve many counting problems, so do the Fibonacci polynomials.  For instance, the coefficient of $x^k$ in $y_n$ counts the number of tilings of a $2 \times n$ grid of squares by dominoes where exactly $k$ of the dominoes are horizontal \cite[Combinatorial Theorem 12]{benjamin}.

Another famous family of polynomials which satisfy second order linear recurrence relations are the {\em Chebyshev polynomials}. Each class of Chebyshev polynomials satisfies the recurrence
\[ p_{n+1} = -p_{n-1}+2xp_n\]
and the various classes differ only in their initial conditions.  These polynomials arise naturally in the context of differential equations and trigonometric functions, but like the Fibonacci examples, they enjoy a staggering diversity of applications throughout mathematics. 

This paper investigates sets of polynomials with a more complex recursive structure.  Informally, these polynomials correspond to the vertices of the infinite graph $\calD \subset \R^2$ indicated in Figure \ref{fig:bigdaddy} and each subset of polynomials on a straight line in the graph satisfies a second order linear recurrence relation which depends on polynomials assoicated to vertices in the graph above the line.   Sets of polynomials with this structure are called {\em Farey recursive} - a precise definition is given in Section \ref{s:2}.  The graph $\calD$, referred to here as the {\em Stern-Brocot diagram}, is constructed carefully in \cite[Ch. 1]{hatcher} and is closely related to the classical {\em Farey graph}. We outline Hatcher's construction in Section \ref{sec:tree}, highlighting the parts which are especially relevant for our results in subsequent sections. For now, we mention that by projecting vertices to their $x$-coordinate, we obtain a correspondence between the extended rationals $\whq = \Q \cup \{ \infty \}$ and the vertices of $\calD$.  Thus, a set of Farey recursive polynomials can be viewed as the image of a {\em Farey recursive function} from $\whq$ to a polynomial ring.

\begin{figure}[ht]
\centering
\includegraphics[width = .85\textwidth]{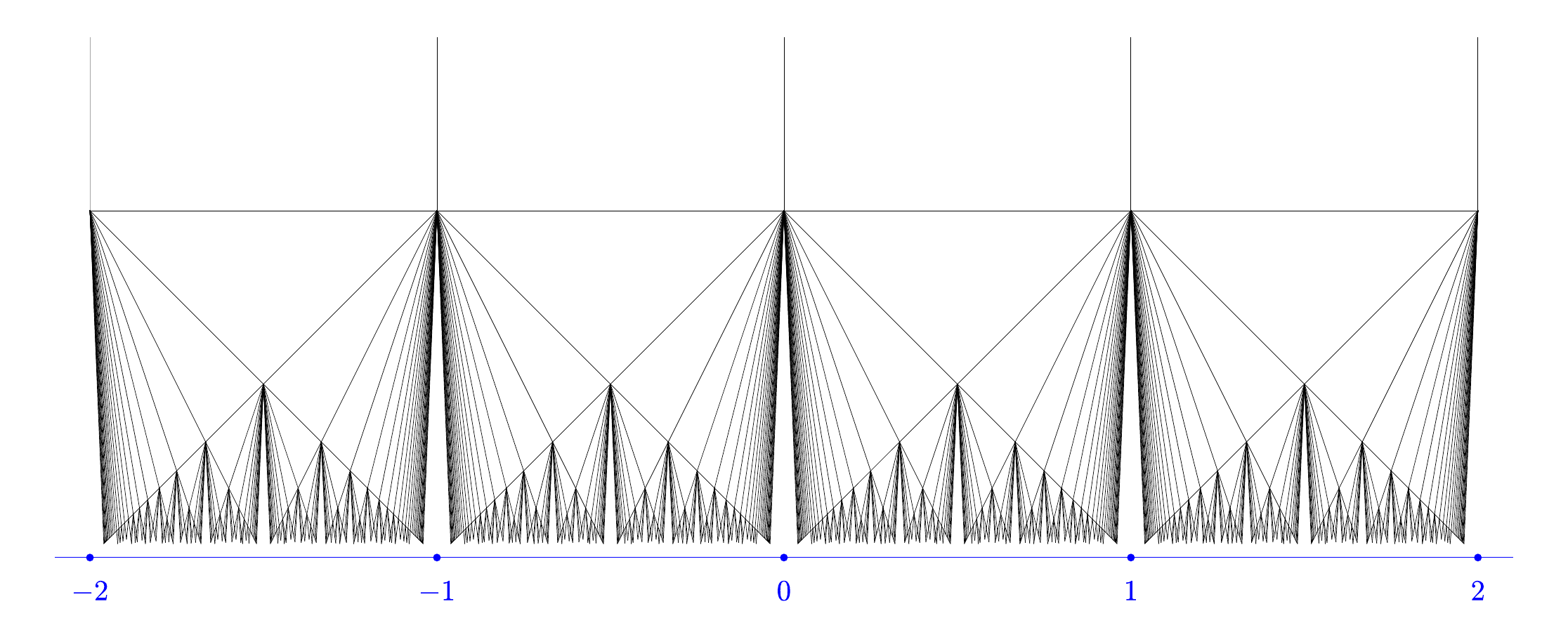}
\caption{The portion of $\mathcal{D}$ with $x$-values in $[-2,2]$}
\label{fig:bigdaddy}
\end{figure}

As seen in the above examples, a sequence with second order linear recurrence is uniquely determined by a pair of sequential terms together with the recurrence relation.  Consequently, these sequences are amenable to computer computations and proofs by induction.  One fundamental result of this paper shows that Farey recursive functions are determined concisely in a similar way.  In particular, Theorem \ref{thm:kenton} shows that Farey recursive functions on $\whq$ are defined by a triple of elements along with a bit of extra data referred to as the {\em determinant}.  Armed with this result, it is very easy to construct examples and it empowers both computers and induction as viable tools in this area.

Certain Farey recursive functions arise naturally in the geometry and topology of low-dimensional manifolds and in number theory, see \cite{Bow}, \cite{C2}, \cite{C},  \cite{MSW} and \cite{S}. This paper is motivated by these works, and its purpose is to introduce the general definition of a Farey recursive function as an interesting object in its own right, provide some interesting examples, and to establish some basic properties.  

The main theorems of this paper are Theorems \ref{thm:kenton} and \ref{thm:recursion_formula}.  The first was discussed briefly above. The second describes how Farey recursive sequences {\em wrap around triangles} in $\calD$. This property is explained in detail in Section \ref{sec:wrap}. For now, we mention that it is analogous to how  linearly recurrent sequences (e.g., the Fibonacci numbers) can often be extended into the negative direction giving a bi-infinite sequence.  


\section{The Stern Brocot Diagram}\label{sec:tree}

This section reviews Hatcher's construction of the {\em Stern-Brocot diagram} $\calD$ and describes some anatomy of $\calD$ which will be helpful in what follows. Nearly all of the content here is adapted from \cite{hatcher}.

With one exception, the vertices of the graph $\calD\subset \R^2$ correspond to the rational numbers.  Our convention is to always write rational numbers as quotients in lowest terms with non-negative denominators.  In particular, if $n \in \Z$, we write $n=n/1$.  The {\em extended rationals} $\whq$ consist of the usual rationals together with an abstract point at infinity denoted $1/0$. 

For a non-negative integer $n$, define the $n^\text{th}$ {\em Farey sequence} to be
\[ F_n = \left\{ \frac{p}{q} \, \bigg| \, q \leq n \right\}\subset \whq.\]
The elements of $F_n$ are ordered from smallest to largest.  Evidently, 
\[ F_0 \subset F_1 \subset \cdots \subset F_n \subset \cdots \qquad \text{and} \qquad \bigcup_{n \geq 0} F_n = \whq.\]
The Farey sequences are named for the geologist John Farey Sr. who, in the early 1800's, conjectured that, if $\beta \in F_{n+1}-F_n$ and $\alpha, \beta, \gamma$ are  consecutive in $F_{n+1}$, then $\beta$ is the mediant of $\alpha$ and $\gamma$.  The {\em mediant} of a pair of rational numbers $p/q$ and $r/s$ is the number $(p+r)/(q+s)$. Shortly afterwards, Cauchy supplied a proof.  Unknown to Cauchy, another mathematician, Charles Haros, had published similar results previously \cite[p.44]{HW}, \cite{CTK}.

In \cite[Ch.1\&2]{hatcher}, Hatcher gives an elementary and geometric argument for Farey's conjecture.  Hatcher's proof is outlined here, because it helps to motivate the Stern-Brocot diagram which is key to his argument and the rest of this paper.

First, a bit of notation.  Define $\mathbb{H}^2=\{ (x,y) \, | \, y>0\}$, the upper half of $\R^2$ and, for a pair of points $a,b \in \R^2$, let $[a,b]$ denote the straight line segment that connects $a$ and $b$.

In what will ultimately become the vertex set for $\calD$, there is an inductively defined collection of sets of points $P_n \subset \mathbb{H}^2$ indexed by the natural numbers.  The first set corresponds to $F_1$ and is defined as  $P_1=\{ (n,1) \, | \, n \in \Z\}$.  Notice that the points of $P_1$ have distinct $x$-coordinates, so $P_1$ is a bi-infinite ordered sequence, ordered by their integer first coordinates.  Now, suppose that $P_n \subset \mathbb{H}^2$ is a bi-infinite sequence of rational points with distinct $x$-coordinates (whose denominators are at most $n$), ordered by first coordinates.  A pair of consecutive points in $P_n$ constitute the upper two corners of a quadrilateral whose bottom lies on the $x$-axis and whose sides are  vertical.  The intersection of the diagonals of the quadrilateral lies in its interior, so this intersection point lies in $\mathbb{H}^2$.  Also, its $x$-coordinate is distinct from the $x$-coordinates of every point in $P_n$.   $P_{n+1}$ is defined to be the union of all such intersection points (whose denominators are at most $n+1$) together with the points of $P_n$.  Since the $x$-coordinates of the points in $P_{n+1}$ are distinct, $P_{n+1}$ is a bi-infinite ordered sequence, ordered by first coordinates.  Points of $P_n$ can be and are often identified with their first coordinates.

It is now possible to define the {\em Stern-Brocot diagram} $\mathcal D$. Let $\calD_0$ be the union, over all $n \in \mathbb{N}$, of all line segments $[a,b]$ where $a$ and $b$ are adjacent in $P_n$.  Define
\[ \calD = \left\{ (n,t) \, | \, n \in \Z \text{ and } t\geq 1 \right\} \cup \calD_0.\]
The {\em vertices of $\calD$} are the points 
\[\bigcup_{n\in \mathbb N} P_n \cup \left\{\frac10\right\}.\] 
The exceptional point $\frac10$ is called the {\em vertex at infinity}. It arises by compactifying the non-compact ends of the vertical rays $\{ (n,t) \, | \, t\geq1\}$ with a single point.  After setting $P_0 = \{ 1/0 \}$, the full vertex set of $\calD$ is $\cup_0^\infty P_n$.  Some pieces of $\calD$ are pictured in Figures \ref{fig:bigdaddy} and \ref{fig:shorty}.  Very nice pictures of this construction can be found in Chapter 1 of \cite{hatcher}.  

Suppose that $n \geq 1$ and that $b \in P_{n+1}-P_n$.  The adjacent terms $a,c \in P_{n+1}$ (so that $a<b<c$ and no point in $P_{n+1}$ lies between $a$ and $b$ or $b$ and $c$) are called $b$'s {\em parents}.  The point $a$ is called the {\em left parent} for $b$ and $c$ is the {\em right parent} for $b$.

\begin{figure}[ht]
\centering
\includegraphics[width = .5\textwidth]{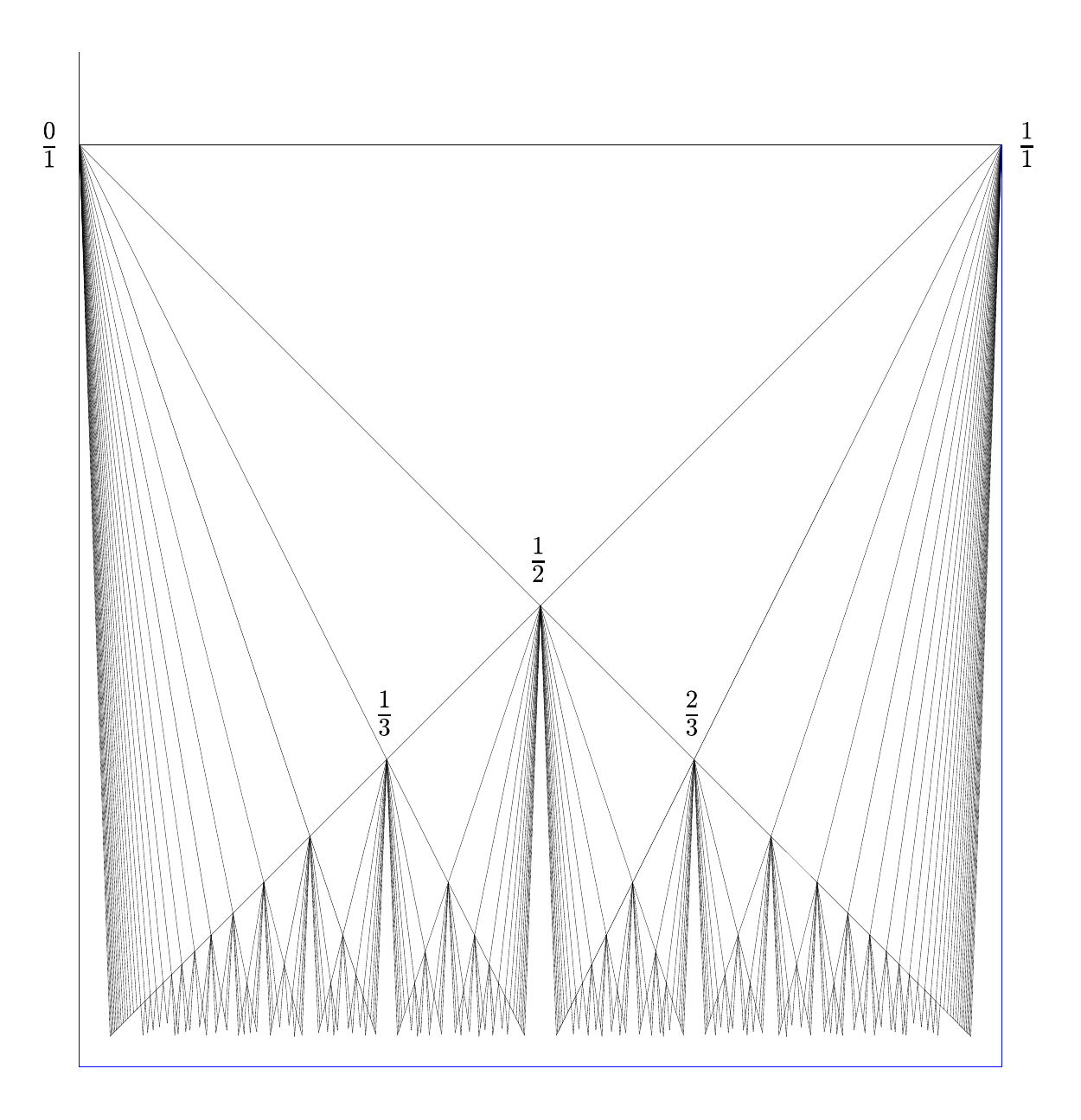}
\caption{The portion of $\calD$ over $[0,1]$.}
\label{fig:shorty}
\end{figure}

The next lemma follows easily using analytic geometry and is proven in \cite[p.20]{hatcher}
\begin{lem}\label{lem:quad}
If $p/q$ and $r/s$ are distinct rational numbers, then the diagonals of the quadrilateral with vertex set
\[\left\{(p/q,1/q), (p/q,0), (r/s,1/s), (r/s,0)\right\}\]
intersect at the point $\left(  \frac{p+r}{q+s}, \frac{1}{q+s}\right)$.
\end{lem}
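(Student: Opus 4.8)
The plan is to avoid solving a linear system and instead simply verify that the proposed point $\left(\frac{p+r}{q+s},\frac{1}{q+s}\right)$ lies on each diagonal; since two distinct lines meet in at most one point, exhibiting a common point identifies the intersection uniquely. First I would name the diagonals. The quadrilateral has two vertical sides (at $x=p/q$ and $x=r/s$), a bottom side on the $x$-axis, and a top side joining $(p/q,1/q)$ to $(r/s,1/s)$; the diagonals are therefore the segments $\delta_1 = [(p/q,1/q),(r/s,0)]$ and $\delta_2 = [(r/s,1/s),(p/q,0)]$ connecting opposite corners. Because $p/q$ and $r/s$ are distinct finite rationals written in lowest terms with positive denominators, we have $q,s\geq 1$, so $q+s\geq 2>0$ and the point in question is well defined; distinctness of $p/q$ and $r/s$ also guarantees that $\delta_1$ and $\delta_2$ are genuinely distinct (non-collinear) segments rather than a single line.

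Next I would parametrize $\delta_1$ as $(1-t)(p/q,1/q)+t(r/s,0)$ for $t\in[0,1]$ and solve the $y$-coordinate equation. Setting $(1-t)/q = 1/(q+s)$ forces $t = s/(q+s)$, and substituting this value into the $x$-coordinate gives
\[ (1-t)\frac{p}{q}+t\frac{r}{s} \;=\; \frac{q}{q+s}\cdot\frac{p}{q}+\frac{s}{q+s}\cdot\frac{r}{s} \;=\; \frac{p+r}{q+s}, \]
which is exactly the claimed abscissa, and one checks $t\in[0,1]$ so the point lies on the segment, not merely its extension. Repeating the identical computation for $\delta_2$, the symmetric substitution $u = q/(q+s)$ makes its $y$-coordinate equal to $1/(q+s)$ and yields the same $x$-coordinate $\frac{p+r}{q+s}$.

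Having shown the proposed point lies on both $\delta_1$ and $\delta_2$, I would conclude by invoking the fact that two distinct lines in $\R^2$ intersect in at most one point, so this common point is \emph{the} intersection of the diagonals, completing the proof. I do not anticipate a genuine obstacle here: the argument is elementary analytic geometry, and the only care required is the bookkeeping check that $q+s\neq 0$ and that the two diagonals are distinct, both of which follow immediately from $p/q$ and $r/s$ being distinct rationals with positive denominators.
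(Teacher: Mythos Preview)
Your proof is correct and is precisely the kind of elementary analytic-geometry verification the paper has in mind; the paper itself does not spell out a proof but simply notes that the lemma ``follows easily using analytic geometry'' and refers to Hatcher. Your parametrization-and-check argument is the natural way to carry this out, and the bookkeeping about $q+s>0$ and the distinctness of the two diagonals is handled correctly.
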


Lemma \ref{lem:quad} shows that if the points of $P_n$ all have the form $(p/q,1/q)$ for some $p,q\in \Z$ then the new points in $P_{n+1}$, which are the intersection points of the diagonals in the lemma, also take this form. Since the  points in $P_1$ look like $(n/1,1/1)$ for $n \in \Z$, induction proves the first statement in the next proposition. The second statement follows directly from Lemma \ref{lem:quad}.

\begin{prop}\label{prop:mediant}
For all $n$, every point of $P_n$ is of the form $(p/q,1/q)$ where $p$ and $q$ are integers and $q$ is positive.  Moreover, if $p/q \in P_n$ then $p/q$ is the mediant of its parents.
\end{prop}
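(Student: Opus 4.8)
The plan is to establish the two assertions separately: the first by induction on $n$ built on Lemma \ref{lem:quad}, and the second as an immediate consequence of that same lemma once the first is known. (The lone vertex at infinity $1/0 \in P_0$ is the exceptional point excluded by the requirement that $q$ be positive, so the induction begins at $n = 1$.)

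For the first statement, I would induct on $n$. The base case is $P_1 = \{(n,1) : n \in \Z\}$; writing each point as $(n/1,1/1)$ exhibits the required form, with numerator $n$ and positive denominator $1$. For the inductive step, assume every point of $P_n$ has the shape $(p/q,1/q)$ with $p,q \in \Z$ and $q>0$. By construction, each new point of $P_{n+1}-P_n$ is the intersection of the diagonals of a quadrilateral whose upper corners are two consecutive points $(p/q,1/q)$ and $(r/s,1/s)$ of $P_n$ and whose lower corners lie on the $x$-axis. Lemma \ref{lem:quad} identifies this intersection as $\left(\frac{p+r}{q+s},\frac{1}{q+s}\right)$, and since $q,s>0$ gives $q+s>0$, the new point again has the desired form with integer numerator $p+r$ and positive denominator $q+s$. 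As the old points of $P_n$ satisfy the claim by hypothesis, every point of $P_{n+1}$ does, completing the induction.

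For the second statement, let $b$ be a vertex possessing parents, i.e. $b \in P_{m+1}-P_m$ for some $m\geq 1$, with left and right parents $a,c \in P_m$. By the construction, $b$ is exactly the diagonal intersection of the quadrilateral erected over the consecutive pair $a,c$. Expressing $a$ and $c$ in the form furnished by the first statement and applying Lemma \ref{lem:quad}, the $x$-coordinate of $b$ comes out to be the mediant of the $x$-coordinates of $a$ and $c$, which is precisely the claim.

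The one point requiring care, and the step I expect to be the true obstacle, is reconciling the representation $(p/q,1/q)$ delivered by the induction with the lowest-terms convention underlying the definition of the mediant. The first statement alone only supplies some integer representation with $q=1/y$; to conclude that the $x$-coordinate of $b$ is the mediant $(p_a+p_c)/(q_a+q_c)$ of its parents in the declared sense, one must know the parents' representations are already reduced. I would handle this by strengthening the inductive hypothesis to carry the unimodular relation $p_a q_c - p_c q_a = \pm 1$ between the coordinates of consecutive vertices; this both forces each $(p,q)$ to be coprime and is itself preserved under passing to mediants, so it propagates through Lemma \ref{lem:quad} alongside the form. With this reconciliation in place, both statements follow with no further computation.
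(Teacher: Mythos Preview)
Your argument is correct and matches the paper's proof: induction on $n$ via Lemma \ref{lem:quad} for the first statement, then a direct appeal to that lemma for the second. The lowest-terms concern in your final paragraph is not actually part of this proposition in the paper---the representation $(p/q,1/q)$ is pinned down uniquely by the $y$-coordinate, so the mediant statement is unambiguous as it stands, and the paper defers the coprimality of these representations to Properties \ref{ppty:gcd} and \ref{ppty:triple} immediately afterward rather than folding the unimodular relation into the induction as you propose.
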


A pair of fractions $p/q$ and $r/s$ are called a {\em Farey pair} if the determinant, $ps-qr$, of the matrix $\left[\begin{smallmatrix} p & r \\ q & s \end{smallmatrix}\right]$ is $\pm 1$.  Since switching the columns of a matrix change the sign of its determinant, this notion is well-defined regardless of which fraction comes first.  Consecutive integers (i.e., the $x$-coordinates of adjacent points in $P_1$) serve as a first example of Farey pairs.  As the properties below show, mediants of Farey pairs produce more Farey pairs and, as such, the mediant of a Farey pair has special notation. If $\alpha$, $\beta$ is a Farey pair, their median is called a {\em Farey sum} and is denoted $\alpha \oplus \beta$. A {\em Farey triple} is a collection of three rational numbers, any pair of which is a Farey pair.

\begin{ppty} \label{ppty:gcd}
 If $p/q$ and $r/s$ are a Farey pair then $\gcd(p+r, q+s)=1$.
\end{ppty}

\begin{ppty} \label{ppty:triple}
 If $\alpha$ and $\beta$ are a Farey pair then $\alpha$, $\beta$, and $\alpha \oplus \beta$ form a Farey triple.  
\end{ppty}

\smallskip \noindent
Property \ref{ppty:gcd} follows because, if $p+r=km$ and $q+s=kn$, then $ps-qr=k(ms-nr)$.  A quick calculation shows that the determinants of $\left[ \begin{smallmatrix} p & r \\ q & s \end{smallmatrix} \right]$ and $\left[ \begin{smallmatrix} p & p+r \\ q & q+s \end{smallmatrix} \right]$ are equal.  This establishes Property \ref{ppty:triple}.

Together with Proposition \ref{prop:mediant}, these properties imply that every pair of vertices in $\calD$ which are connected by an edge make a Farey pair.  By Property \ref{ppty:gcd}, when these coordinates are computed by taking Farey sums, the resulting fractions will never need to be simplified by canceling common factors.  Lastly, by Property \ref{ppty:triple}, if $\alpha$ and $\beta$ are a Farey pair, then Farey sums can be taken repeatedly with $\alpha$ as $(((\beta \oplus \alpha) \oplus \alpha) \oplus \cdots \oplus \alpha)$.  This repeated sum is denoted as $ \beta \oplus^k \alpha$, where $k$ is the number of $\alpha$ summands, and will play a pivotal role in our definition of Farey Recursive Functions in Section \ref{s:2}.  Observe that, if $w/x$ and $y/z$ are a Farey pair, then
\begin{align} \label{eq:repeat} \frac{w}{x} \oplus^k \frac{y}{z}  &= \frac{w+ky}{x+kz}\end{align}

To complete a proof of Farey's conjecture, it suffices to show that $F_n$ is precisely the set of $x$-coordinates of $P_n$.  Hatcher does this by utilizing a beautiful connection between mediants, continued fraction expansions, and matrix multiplication.  This is discussed next.

Suppose $p/q$ is a rational number.  The Euclidean algorithm can be used to find a {\em continued fraction expansion} for $p/q$.  This expresses $p/q$ as
\[
\frac{p}{q}=a_0+\cfrac{1}{a_1+\cfrac{1}{\begin{array}{ccc} a_2 + & &  \\ & \ddots & \\ & & \frac{1}{a_n} \end{array}}} \]
where $a_0 \in \Z$ and $a_i \in \mathbb{N}$ for $i\geq 0$.  The expression on the right hand side is often abbreviated as $[a_0; a_1, \ldots , a_n]$. 

Let $t_i$ and $u_i$ be the numerator and denominator of $ [0; a_{i}, \ldots, a_n]$ when expressed in lowest terms.  Then
\begin{equation*}\label{eq:1} 
\frac{t_{i-1}}{u_{i-1}} = \left( a_{i-1} + \frac{t_i}{u_i} \right)^{-1}= \frac{u_i}{ t_i + a_{i-1}u_i} 
\end{equation*}
and
\begin{equation}\label{eq:2}
 \begin{bmatrix} 0 & 1 \\ 1 & a_{i-1} \end{bmatrix} \begin{bmatrix} t_i \\ u_i \end{bmatrix} =\begin{bmatrix} u_i \\ t_i + a_{i-1}u_i \end{bmatrix} = \begin{bmatrix}t_{i-1}\\u_{i-1}\end{bmatrix}.
 \end{equation}
Multiply the matrices on the left hand side of  (\ref{eq:prod}) together starting on the right side of the product. Repeated use of equation (\ref{eq:2}) shows
\begin{align} \label{eq:prod} \begin{bmatrix} 1 & a_0 \\ 0 & 1 \end{bmatrix} \begin{bmatrix} 0 & 1 \\ 1 & a_1 \end{bmatrix}\begin{bmatrix} 0 & 1 \\ 1 & a_2 \end{bmatrix} \cdots  \begin{bmatrix} 0 & 1 \\ 1 & a_n \end{bmatrix} \begin{bmatrix} 0  \\ 1  \end{bmatrix} =\begin{bmatrix} 1 & a_0 \\ 0 & 1 \end{bmatrix}\begin{bmatrix}t_1\\u_1\end{bmatrix} = \begin{bmatrix}t_1+u_1a_0\\u_1\end{bmatrix}.\end{align}
So, the quotient of the entries in the column vector (\ref{eq:prod}) is $p/q$.

On the other hand, we can start the multiplication in (\ref{eq:prod}) from the left.  Note that for any Farey pair $x/y$ and $z/w$,
\[ \begin{bmatrix} x & z \\ y & w \end{bmatrix}   \begin{bmatrix} 0 & 1 \\ 1 & a_i  \end{bmatrix} =  \begin{bmatrix} z& x+a_iz \\ w & y+a_iw \end{bmatrix}.\]
The right hand side matrix corresponds to the Farey pair $z/w$ and $x/y\oplus^{a_i}z/w$ and shows that $p/q$, the quotient of the column entries in (\ref{eq:prod}), is equal to a pattern of repeated Farey sums of Farey pairs, each taken $a_i$ times, $0\leq i \leq n$, beginning with the Farey pair $1/0$ and $a_0/1$ (see \cite[Th. 2.1]{hatcher} for more detailed explanation). The following lemma is now needed.
\begin{lem}\label{lem:con}Let $x/y, z/w \in \Q$ be  consecutive elements of $P_n$ where $n=\max(y,w)$. Then, for every $k\geq 0$, $x/y \oplus^k z/w$ is in $P_{y+kw}$ and is consecutive with $z/w$.
\end{lem}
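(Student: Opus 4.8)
The plan is to induct on $k$, reducing everything to two facts about the construction of $\calD$. Throughout I will use that, by (\ref{eq:repeat}), $x/y\oplus^k z/w=(x+kz)/(y+kw)$, and that by Property \ref{ppty:gcd} this fraction is already reduced, so its denominator is genuinely $y+kw$; write $m_k=(x+kz)/(y+kw)$.

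First I would record the relevant bookkeeping about when vertices appear. By Lemma \ref{lem:quad} the mediant of two vertices is a vertex whose denominator is the sum of their denominators, and $P_{m+1}$ is obtained from $P_m$ by adjoining exactly those mediants of consecutive pairs whose denominator is at most $m+1$. Hence a reduced fraction $p/q$ enters the vertex set precisely at stage $q$; in particular $x/y\in P_y$. The second, and key, fact is a persistence statement: if $a$ and $b$ are consecutive in $P_m$ and $N$ is the sum of their denominators, then they remain consecutive in $P_{m'}$ for all $m\le m'<N$, and at stage $N$ their mediant $a\oplus b$ is inserted between them, becoming consecutive with each. The reason is that every vertex adjoined in passing from $P_{m'}$ to $P_{m'+1}$ is the mediant of some consecutive pair of $P_{m'}$ and therefore lies strictly between that pair; since the open interval between $a$ and $b$ contains no vertex of $P_{m'}$, the only consecutive pair whose mediant can land there is $(a,b)$ itself, and that mediant is not adjoined until its denominator $N$ is reached.

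With these tools the induction is short. For the base case $k=0$ we have $m_0=x/y\in P_y$ and, by hypothesis, $x/y$ is consecutive with $z/w$ in $P_n$, where $n=\max(y,w)$. For the inductive step, suppose $m_k$ is consecutive with $z/w$ (in $P_{y+kw}$ when $k\ge 1$, in $P_n$ when $k=0$). A one-line mediant computation shows that $m_{k+1}$ is the mediant of the consecutive pair $m_k,\,z/w$, whose denominators sum to $(y+kw)+w=y+(k+1)w$. Applying the persistence statement to this pair shows that $m_{k+1}$ is adjoined at stage $y+(k+1)w$ and is consecutive there with $z/w$, which is exactly the assertion for $k+1$. (Note that for $k\ge 1$ we have $y+kw>\max(y,w)=n$ and $y+kw>w$, so that $z/w$ already lies in $P_{y+kw}$ and consecutiveness there is meaningful.)

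I expect the persistence statement to be the only real obstacle: one must argue carefully from the inductive description of $P_{m+1}$ that no extraneous vertex slips in between two consecutive vertices before their mediant appears, and that the mediant appears exactly at the stage dictated by its denominator. Once this is pinned down, the remaining steps are just the hypothesis and the elementary arithmetic of mediants.
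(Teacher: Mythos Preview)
Your proof is correct and follows the same inductive approach as the paper, which dispatches the lemma in a single sentence by asserting the inductive step. You have simply fleshed out what the paper leaves implicit---the persistence of consecutiveness until the mediant's denominator is reached, and the bookkeeping for the base case---so there is no substantive difference in strategy.
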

\begin{proof} This is true by induction since $x/y\oplus z/w \in P_{y+w}$ and $x/y\oplus z/w$ is consecutive with $z/w$ in $P_{y+w}$.
\end{proof}

Lemma \ref{lem:con} says that if we have a matrix $\begin{bmatrix}x&z\\y&w\end{bmatrix}$ with $x/y$ and $z/w$ a consecutive Farey pair in $P_{\max(y,w)}$, then 
\[\begin{bmatrix}x&z\\y&w\end{bmatrix}\begin{bmatrix}0&1\\1&a_i\end{bmatrix}\]
has column ratios which are a consecutive Farey pair in $P_{y+a_iw}$. Since the first two matrices of (\ref{eq:prod}) can be rearranged as
\[\begin{bmatrix} 1 & a_0 \\ 0 & 1 \end{bmatrix} \begin{bmatrix} 0 & 1 \\ 1 & a_1 \end{bmatrix} = \begin{bmatrix} a_0 & 1+a_0a_1 \\ 1&a_1 \end{bmatrix} = \begin{bmatrix}a_0+1&a_0\\1&1\end{bmatrix}\begin{bmatrix}0&1\\1&a_1-1\end{bmatrix}\]
and $a_0/1$, $(a_0+1)/1$ are a consecutive Farey pair in $P_1$, we see that $\begin{bmatrix}p\\q\end{bmatrix}$ is the column vector in (\ref{eq:prod}) (that is, no cancelling happens in the ratio) and is an element of $P_q$. This establishes Farey's conjecture and shows that $\Q = \cup_nP_n$, the vertices of $\calD$.

Recall from above that every edge of $\calD$ connects a Farey pair.  In the next section we prove the converse.  This also provides an opportunity to introduce important definitions and geometric properties of $\calD$ which are used in the definition of Farey recursive functions.

\subsection{Boundary Sequences in $\calD$}

Recall that we identify the elements of $\Q$ with the vertices in $\mathcal D$ by $p/q \leftrightarrow (p/q,1/q)$ and  use both notations interchangeably throughout. Given $\alpha \in \whq$ define the {\bf boundary of $\alpha$} to be the set
\[\partial(\alpha) = \{\beta \in \whq \,|\, \alpha \,\textrm{and}\, \beta \textrm{ are a Farey pair}\}.\]
The geometry of these boundaries vary depending on $\alpha$ and fall into three cases; $\alpha = 1/0$, $\alpha = n/1$ and $\alpha \in \Q-\Z$. The following propositions establish basic properties about boundaries in the three cases.  This is important because the definition of Farey recursion in Section \ref{s:2} requires second order linear recursion on boundaries.

\begin{prop}\label{prop:bdry0} For $n \in \Z$,
\[ \partial(n/1) = \left\{ \frac{kn- 1}{k} \, \Big| \, k \in \Z_{> 0} \right\} \cup \left\{ \frac{kn+ 1}{k} \, \Big| \, k \in \Z_{\geq 0} \right\}\]
and $\partial(1/0)$ constists of the integer points $P_1$.
\end{prop}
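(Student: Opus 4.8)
The plan is to unwind the definition of a Farey pair directly. By definition, $\beta = r/s \in \partial(n/1)$ precisely when the matrix $\left[\begin{smallmatrix} n & r \\ 1 & s \end{smallmatrix}\right]$ has determinant $\pm 1$, that is, when $ns - r = \pm 1$. Writing every rational in lowest terms with non-negative denominator $s\geq 0$, I would solve this linear equation for the numerator $r$ in terms of $s$, obtaining the two families $r = ns - 1$ and $r = ns + 1$. The entire proposition is then a matter of parametrizing these solutions and accounting correctly for the vertex at infinity.

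First I would treat the generic case $s = k > 0$. Setting $r = kn - 1$ gives $\beta = (kn-1)/k$, and setting $r = kn + 1$ gives $\beta = (kn+1)/k$; these are exactly the two indexing families on the right-hand side. I would then observe that these fractions are automatically in lowest terms: any common divisor of $kn - 1$ and $k$ (respectively $kn+1$ and $k$) must divide $1$, so the $\gcd$ is $1$, which is the same computation underlying Property \ref{ppty:gcd}. Hence distinct values of $k$ yield distinct $\beta$, and conversely every $\beta\in\partial(n/1)$ with positive denominator arises this way.

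The only point requiring care, and the step I expect to be the main obstacle, is the bookkeeping at the vertex at infinity and the convention on denominators. When $s = 0$ the sole extended rational is $1/0$, and $\left[\begin{smallmatrix} n & 1 \\ 1 & 0 \end{smallmatrix}\right]$ has determinant $-1$, so $1/0 \in \partial(n/1)$; this is exactly the case $r = ns + 1$ at $k = 0$, which explains why the second family is indexed by $k\in\Z_{\geq 0}$. The equation $r = ns - 1$ at $s = 0$ would instead force numerator $-1$, i.e.\ the non-standard symbol $-1/0$, which is merely a redundant name for the single vertex $1/0$ already produced by the second family; this is precisely why the first family must be indexed by $k\in\Z_{>0}$. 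Taking the union of the two families then gives the claimed description of $\partial(n/1)$.

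Finally, for $\partial(1/0)$ I would run the identical computation with $\alpha = 1/0$. A fraction $\beta = r/s$ lies in $\partial(1/0)$ exactly when $\det\left[\begin{smallmatrix} 1 & r \\ 0 & s \end{smallmatrix}\right] = s = \pm 1$, and since $s\geq 0$ this forces $s = 1$. Thus $\beta = r$ ranges over all integers, which is precisely the vertex set $P_1 = \{(n,1)\mid n\in\Z\}$, completing the argument.
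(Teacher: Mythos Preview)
Your argument is correct and is exactly the approach the paper takes: the paper's proof is simply the sentence ``Both statements follow directly from the definition of a Farey pair,'' and what you have written is a careful unpacking of that sentence, including the bookkeeping at $1/0$ that explains the asymmetry in the index sets $\Z_{>0}$ versus $\Z_{\geq 0}$.
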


\begin{proof} Both statements follow directly from the definition of a Farey pair.
\end{proof}

\begin{prop} \label{prop:bdry}Let $p/q \in \Q-\Z$ and let $\gamma_L$ and $\gamma_R$ denote the left and right parents of $p/q$. Then
\[\partial(p/q) = \{\gamma_L\oplus^kp/q\,|\,k \geq 0\}\cup \{\gamma_R\oplus^kp/q\,|\,k \geq 0\}\]
\end{prop}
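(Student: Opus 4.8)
The plan is to prove the two inclusions separately. Throughout, write the parents as $\gamma_L = a/b$ and $\gamma_R = c/d$ with $b,d>0$; these are finite because, by Proposition \ref{prop:bdry0}, $1/0$ is a Farey partner only of integers while $p/q \in \Q-\Z$. Since $a/b < p/q < c/d$, Proposition \ref{prop:mediant} and Property \ref{ppty:gcd} give $p = a+c$ and $q = b+d$ with no cancellation, the parents form a Farey pair, and the ordering $a/b < c/d$ forces $bc - ad = 1$. A short computation then records the two identities that drive the whole argument,
\[ pb - qa = bc - ad = 1 \qquad\text{and}\qquad pd - qc = ad - bc = -1, \]
so $\gamma_L$ is a Farey partner of $p/q$ of determinant $+1$ and $\gamma_R$ is one of determinant $-1$.

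For the inclusion $\supseteq$ I would compute directly from (\ref{eq:repeat}). Writing $\gamma_L \oplus^k p/q = (a+kp)/(b+kq)$, its determinant against $p/q$ is $p(b+kq) - q(a+kp) = pb - qa = 1$ for every $k \ge 0$; likewise $\gamma_R \oplus^k p/q = (c+kp)/(d+kq)$ has determinant $pd - qc = -1$. A $\pm 1$ determinant forces the numerator and denominator to be coprime, and the denominators $b+kq$ and $d+kq$ are positive for $k \ge 0$, so each listed element is a bona fide extended rational forming a Farey pair with $p/q$. (This is also immediate by induction from Property \ref{ppty:triple}.)

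For the reverse inclusion I take an arbitrary $\beta = r/s \in \partial(p/q)$ written with $s \ge 0$. As above, $\beta \neq 1/0$, so $s > 0$, and $ps - qr = \pm 1$. I treat the two signs as linear Diophantine problems sharing the homogeneous part $(p,q)$, which is primitive since $p/q$ is in lowest terms. When $ps - qr = 1$ the displayed identity shows $(a,b)$ is a particular solution, so $(r,s) = (a+kp, b+kq)$ for some $k \in \Z$; when $ps - qr = -1$ the pair $(c,d)$ is a particular solution and $(r,s) = (c+kp, d+kq)$. By (\ref{eq:repeat}) these say $\beta = \gamma_L \oplus^k p/q$ or $\beta = \gamma_R \oplus^k p/q$, so it only remains to pin down $k \ge 0$.

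This positivity bookkeeping is the step I expect to require the most care, because the positive-denominator convention is entangled with the sign of the determinant. Since $b,d \ge 1$ and $q = b+d \ge 2$, any $k \le -1$ makes $b + kq \le b - q = -d < 0$ (and similarly $d + kq < 0$), contradicting $s > 0$; hence $k \ge 0$ in both cases, completing the inclusion $\subseteq$. The mechanism worth flagging is that putting $k = -1$ in the determinant $+1$ family returns $(-c)/(-d) = c/d = \gamma_R$: re-expressing one rational with a positive denominator flips the determinant's sign, which is precisely why the two-sided integer solution set collapses to the two one-sided sequences indexed by $k \ge 0$.
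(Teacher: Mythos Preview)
Your proof is correct and follows essentially the same route as the paper's: both arguments identify the parents as particular solutions of the Diophantine equations $ps-qr=\pm1$, parametrize all integer solutions, and use the positive-denominator convention together with $0<b,d<q$ to force $k\ge 0$. Your write-up is slightly more explicit about the $\supseteq$ inclusion and about excluding $1/0$, but the underlying mechanism is identical.
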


\begin{proof} Let $r/s$ and $t/u$ be the left and right parents of $p/q$.  Because $r/s$, $p/q$, and $t/u$ are connected pairwise by edges in $\calD$, they form a Farey triple.  Moreover, since $q>1$ we have  $0<s,u<q$.  Also, $r/s<p/q<t/u$ implies
\begin{equation}\label{eq:4} ps-rq=1 \quad \text{and} \quad pu-tq=-1.
\end{equation}
That is $(x,y) = (r,s)$ is a solution to the linear equation $py-xq = 1$ and $(x,y) = (t,u)$ is a solution to $py-xq = -1$. By \cite[Lem 2.4]{hatcher}, all solutions to $py-xq = 1$ are of the form $(r+kp,s+kq)$ for some $k \in \Z$ and all solutions to $py-xq = -1$ are of the form $(t+kp,u+kq)$ for some $k \in \Z$. Take $x/y \in \partial(p/q)$. Then $(x,y)$ is a solution to one of (\ref{eq:4}) with $y>0$. In particular, either $(x,y) = (r+kp,s+kq)$ or $(x,y) = (t+kp,u+kq)$. Since $0<s,u<q$, $s+kq>0$ iff $k\geq 0$ and $u+kq > 0$ iff $k \geq 0$. In particular, $x/y = r/s\oplus^kp/q$ or $x/y = t/u\oplus^kp/q$ for some $k\geq$, showing that $x/y$ is in one of the two subsets in the proposition.
\end{proof}

For $\alpha = p/q \in \Q$, denote the two subsets making up $\partial(\alpha)$ in Propositions \ref{prop:bdry}/\ref{prop:bdry0} by $\partial_L(\alpha)$ and $\partial_R(\alpha)$, called the {\em left } (resp. \emph{right}) \emph{boundary sequence} for $\alpha$. If $\alpha=n/1 \in \Z$ then  Note that
\[ \partial_L(\alpha)=\left\{ \beta  \in \partial(\alpha) \, | \, \beta < \alpha \right\} \,\,\textrm{ and }\,\, \partial_R(\alpha)=\left\{ \beta  \in \partial(\alpha) \, | \, \beta > \alpha \right\}. \]
Moreover, by Proposition \ref{prop:bdry}, $\gamma_L$ is the element of $\partial_L(\alpha)$ with smallest denominator (similarly for $\gamma_R$) and hence lies vertically highest in $\mathcal D$ of all the elements in $\partial_L(\alpha)$. The following corollary shows that all of the elements of $\partial(\alpha)$ lie on a euclidean triangle in $\mathbb H^2$ containing $\alpha$ in the interior and with corners $\gamma_L$, $\gamma_R$ and $(p/q,0)$. This triangle is denoted $\Delta(\alpha)$ and called the {\em boundary triangle of $\alpha$}. We will refer to the parents $\gamma_L$ and $\gamma_R$ of $\alpha$ as the {\em left and right corners} of $\alpha$ (or as the corners of $\Delta(\alpha)$). 

\begin{cor}
Let $\alpha = p/q \in \Q-\Z$. The elements of $\partial_L(\alpha)$ lie on the line through $\gamma_L$ and $(\alpha,0)$ which has slope $-q$. The elements of $\partial_R(\alpha)$ lie on the line through $\gamma_R$ and $(\alpha,0)$ which has slope $q$. 
\end{cor}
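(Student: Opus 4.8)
The plan is to reduce the entire statement to the explicit coordinate formula supplied by Proposition \ref{prop:bdry}, combined with the determinant relations (\ref{eq:4}). First I would parametrize the left boundary sequence. Writing $\gamma_L = r/s$, Proposition \ref{prop:bdry} says every element of $\partial_L(\alpha)$ has the form $\gamma_L \oplus^k p/q$ for some $k \geq 0$, and by (\ref{eq:repeat}) this Farey sum equals $(r+kp)/(s+kq)$. Under the identification $p/q \leftrightarrow (p/q, 1/q)$ this vertex becomes the point $P_k = \left(\tfrac{r+kp}{s+kq}, \tfrac{1}{s+kq}\right)$ in $\mathbb{H}^2$. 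Observe at the outset that $P_0 = \gamma_L$ and that $P_k \to (p/q,0) = (\alpha,0)$ as $k \to \infty$, so $(\alpha,0)$ is the common limit point of the whole sequence.

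Next I would compute the slope of the segment from the basepoint $(\alpha,0)$ to a general $P_k$. The vertical displacement is simply $\tfrac{1}{s+kq}$. For the horizontal displacement I would put $\tfrac{r+kp}{s+kq}-\tfrac{p}{q}$ over the common denominator $q(s+kq)$; the numerator is $q(r+kp)-p(s+kq)=qr-ps$, where the $kpq$ terms cancel. This is exactly where the determinant relation $ps-rq=1$ from (\ref{eq:4}) does the work: it collapses $qr-ps$ to $-1$, so the horizontal displacement is $\tfrac{-1}{q(s+kq)}$ and the slope is
\[ \frac{1/(s+kq)}{-1/(q(s+kq))} = -q, \]
independent of $k$. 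Since every $P_k$ subtends the same slope from the fixed point $(\alpha,0)$, all of them — and in particular $\gamma_L = P_0$ — are collinear on a single line through $(\alpha,0)$ of slope $-q$, which is precisely the line through $\gamma_L$ and $(\alpha,0)$ claimed in the statement.

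The right boundary sequence is handled by an identical computation with $\gamma_R = t/u$: the general element is $(t+kp)/(u+kq)$, the horizontal-displacement numerator becomes $qt-pu$, and the relevant relation $pu-tq=-1$ from (\ref{eq:4}) now yields $qt-pu=1$, giving horizontal displacement $\tfrac{1}{q(u+kq)}$ and slope $+q$. Thus $\partial_R(\alpha)$ lies on the line through $\gamma_R$ and $(\alpha,0)$ of slope $q$, completing the proof.

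I do not anticipate a genuine obstacle: once the $\oplus^k$ notation is unwound via (\ref{eq:repeat}) and the vertices are placed in $\mathbb{H}^2$, the statement is a one-line slope computation whose only real content is the cancellation of the $kpq$ terms and the substitution of the determinant $\pm 1$. The single point I would state carefully is that $(\alpha,0)$ is not itself a vertex of $\mathcal{D}$ but merely the common limit of the sequence; the argument uses it only as a fixed basepoint, and the constancy of the slope from that basepoint is what forces each sequence onto one of the two stated lines (together bounding the triangle $\Delta(\alpha)$ with base $(p/q,0)$).
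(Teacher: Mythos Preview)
Your proof is correct and follows the same approach as the paper's own argument: parametrize $\partial_L(\alpha)$ via Proposition \ref{prop:bdry} as the points $\left(\tfrac{r+kp}{s+kq},\tfrac{1}{s+kq}\right)$ and check that they lie on the stated line, then handle $\partial_R(\alpha)$ symmetrically. The paper's proof is a one-sentence sketch that omits the slope computation entirely, so your version simply fills in the details (the cancellation of the $kpq$ terms and the use of the determinant relations (\ref{eq:4})) that the paper leaves implicit.
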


\begin{proof} Let $\gamma_L(\alpha) = r/s$. By Proposition \ref{prop:bdry} , an element of $\partial_L(\alpha)$ is of the form $(\frac{r+kp}{s+kq},\frac1{s+kq})$ and thus is on the specified line. The second statement follows similarly. 
\end{proof}

The boundary sequences $\partial(1/0)$ and $\partial(n/1)$ do not lie on a euclidean triangle in $\mathbb H^2$, but we still refer to their boundary ``triangles''. As with the case above, these triangles are the union of the line segments in $\calD$ connecting the vertices in their boundary sequences. In particular, $\Delta(\frac10)$ is the line $y=1$ (and has no ``corners'') and $\Delta(n/1)$ is the union of the line segments from $(\frac{n-1}1,1)$ to $(\frac n1,0)$ to $(\frac{n+1}1,1)$ with the line segments $\{(n-1,t)\,|\,t\geq 1\}$ and $\{(n+1,1\,|\,t\geq 1\}$ (one could say this triangle has one ``corner''; $\frac10$). To illustrate this idea the boundary triangles $\Delta(1/2),\,\Delta(7/5)$ and $\Delta(2/1)$ are pictured in Figure \ref{fig:triangles}
 
\begin{figure}[ht] 
\centering
\includegraphics[width =5in]{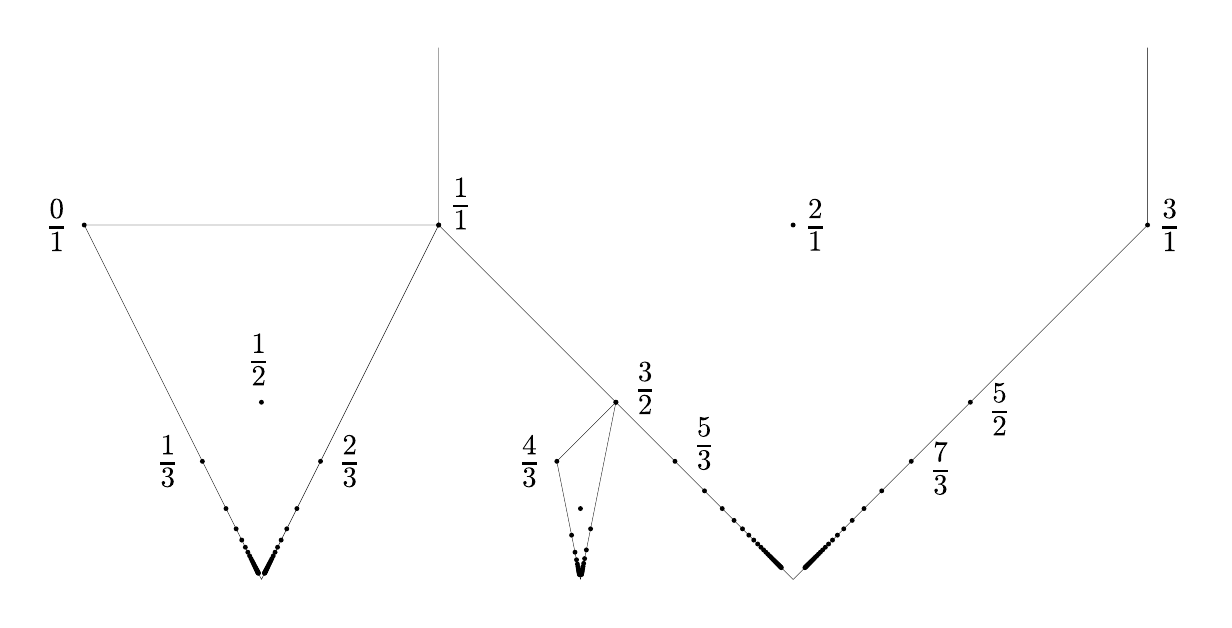}
\caption{Triangles $\Delta(1/2)$, $\Delta(7/5)$, and $\Delta(2)$.}
\label{fig:triangles}
\end{figure}

Notice that, if $\alpha \in \Q$, then $\partial_L(\alpha)$ and $\partial_R(\alpha)$  are naturally ordered as infinite sequences and each term is obtained from its predecessor by Farey summing with the center $\alpha$.

\begin{prop}\label{prop:edges}
If $\alpha \in \whq$ then every term $\gamma$ in  $\partial(\alpha)$ is connected to $\alpha$ and to $\gamma\oplus \alpha$ by an edge in $\calD$.
\end{prop}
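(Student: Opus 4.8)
The plan is to reduce both assertions to the single claim $(\ast)$: each $\gamma \in \partial(\alpha)$ is consecutive with $\alpha$ in $P_N$, where $N$ denotes the larger of the two denominators (in the degenerate cases below, this becomes an incidence with a vertical ray). Recall that the edges of $\calD$ are exactly the segments $[a,b]$ joining consecutive vertices of some $P_n$, together with the vertical rays joining each integer vertex to the vertex at infinity $1/0$. Granting $(\ast)$, the first assertion is immediate, since consecutive vertices of $P_N$ span an edge. For the second, note that $\gamma \oplus \alpha$ is by definition the mediant of the Farey pair $\gamma,\alpha$; since the first vertex ever inserted strictly between a consecutive pair is their mediant, $\gamma \oplus \alpha$ is eventually added directly between $\gamma$ and $\alpha$ and is therefore adjacent to each. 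In particular $\gamma$ and $\gamma \oplus \alpha$ span an edge. Equivalently, the successor of $\gamma$ in its boundary sequence is $\gamma \oplus \alpha$, which is a child---hence a neighbor---of both $\gamma$ and $\alpha$.

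For the generic case $\alpha = p/q \in \Q - \Z$, I would use Proposition \ref{prop:bdry} to write an arbitrary $\gamma \in \partial(\alpha)$ as $\gamma_L \oplus^k \alpha$ or $\gamma_R \oplus^k \alpha$ with $k \geq 0$. Since $\alpha$ first appears in $P_q$ and its parents $\gamma_L, \gamma_R$ are by definition its neighbors there, each of $\gamma_L,\alpha$ and $\gamma_R,\alpha$ is a consecutive pair in $P_q$; as the parent denominators $s,u$ satisfy $s,u < q$, this $P_q$ is exactly the $P_N$ of $(\ast)$ for $k=0$. Lemma \ref{lem:con} then applies with $z/w = \alpha$ and $x/y = \gamma_L$ (resp.\ $\gamma_R$): for every $k \geq 0$ the vertex $\gamma_L \oplus^k \alpha$ (resp.\ $\gamma_R \oplus^k \alpha$) lies in $P_{s+kq}$ (resp.\ $P_{u+kq}$) and is consecutive with $\alpha$ there. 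Since $s+kq$ (resp.\ $u+kq$) is exactly the larger denominator, this establishes $(\ast)$ in the generic case.

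The two exceptional cases need separate but routine bookkeeping, since their corner is the vertex at infinity. If $\alpha = 1/0$, then $\partial(1/0) = P_1$ by Proposition \ref{prop:bdry0}, so each $\gamma = n/1$ is joined to $1/0$ by its vertical ray and $\gamma \oplus \alpha = (n+1)/1$ is its neighbor in $P_1$; both assertions hold outright. If $\alpha = m/1 \in \Z$, the boundary sequences are $\{1/0 \oplus^k \alpha : k \geq 0\}$ and $\{(-1/0) \oplus^k \alpha : k \geq 1\}$. The shared corner $1/0$ is joined to $\alpha$ by a vertical ray, and $1/0 \oplus \alpha = (m+1)/1$ is an integer vertex, hence also joined to $1/0$ by a ray. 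For $k \geq 1$ the remaining terms equal $(m \pm 1)/1 \oplus^{k-1}\alpha$, and because $(m\pm1)/1$ and $m/1$ are consecutive integers in $P_1$, Lemma \ref{lem:con} again yields consecutiveness with $\alpha$, giving $(\ast)$.

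The step I expect to be most delicate is not the computation in the generic case---Lemma \ref{lem:con} carries that---but rather making the mediant-insertion argument airtight, since it is what upgrades ``consecutive with $\alpha$'' into genuine adjacency of $\gamma$ with its successor $\gamma \oplus \alpha$. One must verify that, between the level at which $\gamma$ and $\alpha$ first become consecutive and the level $\max(\text{denominators})$ at which $\gamma \oplus \alpha$ appears, no other vertex is inserted between them; this is exactly the inductive mechanism underlying Lemma \ref{lem:con} and should be cited rather than reproved. The secondary nuisance is the vertex at infinity: one must consistently treat the vertical rays as the edges incident to $1/0$ and keep track of both representatives $\pm 1/0$ when $\alpha$ is an integer, so that the left and right boundary sequences are correctly anchored at the single corner.
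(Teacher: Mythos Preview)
Your proposal is correct and follows essentially the same route as the paper: dispose of the cases involving $1/0$ separately, then in the generic case invoke Propositions~\ref{prop:bdry}/\ref{prop:bdry0} to write $\gamma$ as $\gamma_R \oplus^k \alpha$ (or the left-hand analogue) and apply Lemma~\ref{lem:con} to obtain consecutiveness of $\gamma$ with $\alpha$, and then of $\gamma$ with $\gamma \oplus \alpha = \gamma_R \oplus^{k+1}\alpha$. The paper is terser about the mediant-insertion step you flag as delicate---it simply cites Lemma~\ref{lem:con} once more at level $k+1$---but the content and case structure are the same.
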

\begin{proof}  We first dispense with the case of $\alpha$ or $\gamma$ is $1/0$. Suppose first that $\alpha =1/0$ and $\gamma \in \partial(1/0)$.  Then $\gamma=n/1$ and, by definition of $\calD$, $[\alpha, \gamma]$ is an edge in $\calD$.  Since $\gamma\oplus\alpha = (n+1)/1$ is consecutive with $\gamma = n/1$ in $P_1$, $[\gamma,\gamma\oplus\alpha]$ is an edge in $\calD$.

Similarly, if $\gamma = 1/0 \in \partial(\alpha)$ then $\alpha=n/1 \in \Z$ and both $\alpha = n/1$ and $\alpha \oplus \gamma =  (n+1)/1$ are connected to $\gamma = 1/0$ in $\calD$.

Now take $\alpha=p/q \in \Q$ and $\gamma=r/s \in \partial_R(\alpha)$ (the $\partial_L(\alpha)$ case follows similarly) with $s \ne 0$. Then by Propositions \ref{prop:bdry} and \ref{prop:bdry0}, $\gamma = \gamma_R\oplus^k\alpha$ for some $k\geq 0$.  By Lemma \ref{lem:con} $\gamma = \gamma_R\oplus^k\alpha$ is consecutive with $\alpha$ in $P_{s+kq}$ and also, hence, with $\gamma\oplus \alpha = \gamma_R \oplus^{k+1}\alpha$ in $P_{s+(k+1)q}$.
\end{proof}

Aside from providing careful descriptions of the triangles $\Delta(\alpha)$, the work above provides the last fact needed to establish the following proposition.

\begin{prop} \label{prop:converse}
The $x$-coordinates of the finite vertices of $\calD$ are precisely the set of rational numbers.  A pair of vertices are connected by an edge in $\calD$ if and only if their $x$-coordinates make a Farey pair.   
\end{prop}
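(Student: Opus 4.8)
The plan is to dispatch both claims quickly by assembling results already in hand; the substantive work has been paid for in the preceding propositions. For the first claim, I would observe that the finite vertices of $\calD$ are exactly $\bigcup_{n\geq 1}P_n$, since the only vertex in $P_0$ is the vertex at infinity $1/0$. The discussion completing Farey's conjecture established that $\Q=\bigcup_n P_n$, and by Proposition \ref{prop:mediant} each point of $P_n$ is identified with its $x$-coordinate $p/q\in\Q$. Hence the $x$-coordinates of the finite vertices are precisely $\Q$, which is the first statement.

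For the second claim I would treat the two implications separately. The forward (``only if'') direction has essentially already been assembled in the text: by Proposition \ref{prop:mediant} any non-integer vertex of $\calD$ is the mediant of its parents, and Property \ref{ppty:triple} then shows that such a vertex together with each of its parents forms a Farey pair. Since every edge of $\calD$ either joins two consecutive points of some $P_n$ (a mediant and one of its parents) or is one of the compactified vertical rays joining $1/0$ to an integer point, a direct determinant check in the latter case completes the ``only if'' direction.

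The converse is where the boundary machinery does the work, but Proposition \ref{prop:edges} makes it immediate. Suppose two vertices have $x$-coordinates $\alpha$ and $\gamma$ forming a Farey pair. By the definition of $\partial(\alpha)$ this says exactly that $\gamma\in\partial(\alpha)$, and Proposition \ref{prop:edges} then guarantees that $\gamma$ is connected to $\alpha$ by an edge of $\calD$, as required.

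I do not expect a genuine obstacle here, since the real content was already established in Propositions \ref{prop:bdry}, \ref{prop:bdry0}, and \ref{prop:edges}, whose proofs classify $\partial(\alpha)$ and show each of its members is adjacent to $\alpha$ in $\calD$. The only point requiring mild care is the bookkeeping in the forward direction: one must remember to handle the edges incident to the vertex at infinity, since those arise from the compactified vertical rays rather than as segments between consecutive finite points of a single $P_n$.
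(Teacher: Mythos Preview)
Your proposal is correct and mirrors the paper's own proof closely: the paper likewise cites the already-proven Farey conjecture for the first claim, invokes the earlier observation that edges of $\calD$ join Farey pairs for the forward implication, and uses Proposition~\ref{prop:edges} (via $r/s\in\partial(p/q)$) for the converse. Your version simply unpacks the forward direction in slightly more detail than the paper does.
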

\proof
The first sentence of the proposition follows from Farey's conjecture, which has been proven.  It has also been established that every edge of $\calD$ connects a Farey pair.  So, to complete the proof, assume that $p/q$ and $r/s$ make a Farey pair.    Then $r/s\in\partial(p/q)$.  Therefore, by Proposition \ref{prop:edges}, $p/q$ and $r/s$ are connected by an edge in $\calD$.
\endproof

\begin{cor}\label{cor:unique} Let $\alpha \in \Q$. There exists a unique Farey pair $\gamma_L,\gamma_R \in \whq$ with $\alpha= \gamma_L\oplus\gamma_R$. 
\end{cor}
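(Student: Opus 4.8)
The plan is to prove existence and uniqueness separately, with the real content lying in uniqueness. Existence is essentially a repackaging of facts already established. For $\alpha = p/q \in \Q - \Z$, its left and right parents $\gl$ and $\gr$ are the two non-$\alpha$ vertices of a Farey triple, so they are connected by an edge of $\calD$ and hence form a Farey pair, while Proposition \ref{prop:mediant} says $\alpha$ is their mediant; since the Farey sum of a Farey pair is exactly their mediant, $\alpha = \gl \oplus \gr$. For $\alpha = n/1 \in \Z$ I would instead exhibit the pair directly: the pair $(n-1)/1$ and $1/0$ has determinant $\pm 1$ and mediant $(n-1+1)/(1+0) = n/1$, so it is a Farey pair with $\alpha = (n-1)/1 \oplus 1/0$.

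For uniqueness, suppose $\beta_L, \beta_R$ is any Farey pair with $\beta_L \oplus \beta_R = \alpha$. The key first step is to force both summands onto the boundary of $\alpha$: by Property \ref{ppty:triple}, the elements $\beta_L$, $\beta_R$, and $\beta_L \oplus \beta_R = \alpha$ form a Farey triple, so in particular $\beta_L, \beta_R \in \partial(\alpha)$. Since $\beta_L \neq \beta_R$ lie on opposite sides of their mediant $\alpha$ (using the convention that $1/0$ exceeds every integer, as in Proposition \ref{prop:bdry0}), we may relabel so that $\beta_L \in \partial_L(\alpha)$ and $\beta_R \in \partial_R(\alpha)$. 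Now I invoke the explicit descriptions of these boundary sequences. In the generic case $\alpha = p/q \in \Q - \Z$, Proposition \ref{prop:bdry} gives $\beta_L = \gl \oplus^k \alpha$ and $\beta_R = \gr \oplus^m \alpha$ for the true parents $\gl, \gr$ and some integers $k, m \geq 0$; in the integer case Proposition \ref{prop:bdry0} lets me write $\beta_L = (kn-1)/k$ and $\beta_R = (mn+1)/m$ with $k \geq 1$ and $m \geq 0$ (the value $m=0$ giving $1/0$).

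The crux is then a determinant computation. Setting $\beta_L = a/b$ and $\beta_R = c/d$ in the parametrized form above, a short expansion shows that the mediant equals $\alpha$ for all $k, m$ — this is precisely why there are infinitely many mediant-$\alpha$ pairs, only some of which are Farey pairs — so the Farey-pair condition is what pins everything down. Using the parent relations $ps - qr = 1$, $pu - qt = -1$, and $ru - st = -1$ from equation (\ref{eq:4}) to cancel the cross terms, the determinant $ad - bc$ of $\left[\begin{smallmatrix} a & c \\ b & d \end{smallmatrix}\right]$ simplifies to $\pm(1 + k + m)$ in the generic case (and to $\pm(k+m)$ in the integer case). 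Since $\beta_L, \beta_R$ is a Farey pair, this quantity must be $\pm 1$, which forces $k = m = 0$ (respectively $(k,m) = (1,0)$, as $k \geq 1$). Hence $\beta_L, \beta_R$ coincide with the pair produced in the existence step, giving uniqueness.

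I expect this determinant simplification to be the main obstacle: tracking the exact cancellation that produces the value $1 + k + m$ — rather than some expression that could equal $\pm 1$ for larger $k, m$ — is where the rigidity of Farey pairs is genuinely used, and getting the signs of the three parent relations to combine correctly is the one place the argument could go wrong. Everything else is bookkeeping with results already in hand.
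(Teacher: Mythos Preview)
Your argument is correct, but it is genuinely different from the paper's. The paper handles uniqueness in one line via Proposition~\ref{prop:converse}: if $\gamma_L',\gamma_R'$ is any Farey pair with $\gamma_L'\oplus\gamma_R'=\alpha$, then by that proposition $\gamma_L'$ and $\gamma_R'$ are joined by an edge of $\calD$, hence are consecutive in some $P_m$, and the construction of $\calD$ then forces their mediant $\alpha$ to have exactly these two as its parents. Your route instead bypasses the edge characterization entirely: you place $\beta_L,\beta_R$ on $\partial(\alpha)$ via Property~\ref{ppty:triple}, parametrize with Propositions~\ref{prop:bdry0}/\ref{prop:bdry}, and then compute the $2\times2$ determinant directly to get $-(1+k+m)$ (resp.\ $-(k+m)$), which pins down $k=m=0$ (resp.\ $(k,m)=(1,0)$). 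Both are short; the paper's version is slicker but leans on the implicit step ``edge $\Rightarrow$ consecutive in some $P_m$ $\Rightarrow$ parents of the mediant,'' while yours is more hands-on and makes the rigidity completely explicit. Your anticipated ``main obstacle'' is in fact routine: the cross terms $kmpq$ cancel immediately, and the three relations $ps-qr=1$, $pu-qt=-1$, $ru-st=-1$ feed in linearly to give exactly $-(1+k+m)$ with no delicate sign-chasing.
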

\begin{proof} By Proposition \ref{prop:converse} $\alpha \in P_n$ for some $n\geq 1$. Let $n$ be the smallest integer for which $\alpha \in P_n$. If $n>1$ then, by definition, $\alpha$ has a unique set of parents, $\gamma_L$ and $\gamma_R$ which, by the definition of $\calD$, are connected by an edge in $\calD$. If $\alpha = n/1 \in P_1$ then $\alpha = 1/0\oplus (n-1)/1$ and the pair $1/0,(n-1)/1$ are connected by an edge in $\calD$. That is, a solution to the equation exists which are connected by an edge in $\calD$. To see that the solution is unique, consider any other Farey pair $\gamma_L',\gamma_R'$ with $\alpha = \gamma_L'\oplus\gamma'_R$. By Proposition \ref{prop:converse} $\gamma'_L$ and $\gamma'_R$ are connected by an edge in $\calD$ and hence are $\alpha$'s parents.
\end{proof}


\section{Definition of Farey Recursive Functions and Examples}\label{s:2}

As mentioned in the introduction, there many famous sequences which exhibit a second order linear recurrence. The Fibonacci relation, satisyfing $x_{n+1}=x_{n-1}+x_{n}$ can be expressed by the matrix equation
\[ \begin{bmatrix} 0 & 1 \\ 1 & 1 \end{bmatrix} \begin{bmatrix} x_{n-1} \\ x_n \end{bmatrix} = \begin{bmatrix} x_n \\ x_{n+1} \end{bmatrix}.\]
Similarly, a general second order linear recurrence $x_{n+1}=ax_{n-1}+bx_{n}$ corresponds to its {\em recursion matrix} $\left[ \begin{smallmatrix} 0 & 1 \\ a & b \end{smallmatrix} \right]$. In particular, the recursion matrices for the Fibonacci polynomials and the Chebyshev polynomials are $\left[ \begin{smallmatrix} 0 & 1 \\ 1 & x \end{smallmatrix} \right]$ and $\left[ \begin{smallmatrix} 0 & 1 \\-1 & 2x \end{smallmatrix} \right]$, respectively.

The goal here is to define a function on $\whq$ or equivalently on the vertex set of $\calD$.  The values of these function are often polynomials, but in general we only need the image to lie in a ring. The function has second order linear recursion on the boundary sequences for all $\alpha$ and the recursion matrix for the sides of $\Delta(\alpha)$ only depend on $\alpha$.  

\begin{defn} Let $R$ be a commutative ring and suppose $d_1$ and $d_2$ are functions from $\whq$ to $R$.  A function  $\F\co\widehat{\Q}\rightarrow R$ is a $(d_1,d_2)$-{\em Farey Recursive Function (FRF)} if, whenever $\alpha \in \whq$ and $\gamma \in \partial(\alpha)$  
\begin{equation} 
\F\left(\gamma\oplus^2\alpha\right)=-d_1(\alpha)\F\left(\gamma \right)+d_2(\alpha)\F\left(\gamma\oplus \alpha\right).
\label{eq:FRFdef}
\end{equation}
\end{defn}

In other words, the boundary sequences down the sides of $\Delta(\alpha)$ are linearly recursive with recursion matrix $\left[ \begin{smallmatrix} 0 & 1 \\ -d_1(\alpha) & d_2(\alpha) \end{smallmatrix} \right]$.

Every example discussed in this paper satisfies the additional property $d_2=\F$.  When this is true, set $d=d_1$ and $\F$ is referred to as an FRF with {\em determinant} $d$.  The recursion matrix for such a function, down the sides of a triangle $\Delta(\alpha)$ is $\left[ \begin{smallmatrix} 0 & 1 \\ -d(\alpha) & \F(\alpha) \end{smallmatrix} \right]$.

At this point it is natural to wonder whether Farey Recursive Functions exist. In this section we give two simple examples of FRFs with determinant $d$ where $d$ is the zero function. After the proof of Theorem \ref{thm:kenton} we will be able to give many more interesting examples.

\begin{exmpl}\label{ex:mult}
Let $R$ be a commutative ring and $\F\co\whq \to R$ a function that satisfies $\F(\gamma \oplus \alpha) = \F(\alpha)\F(\gamma)$ for every Farey pair $\alpha,\gamma \in \whq$. We refer to such functions as {\em multiplicative}. Set $d \co \whq \to R$ to be the zero function. Then
\begin{eqnarray*}
\F(\gamma\oplus^2 \alpha) &=& 0+\F(\alpha)\F(\gamma\oplus\alpha)\\
&=&-d(\alpha)\F(\gamma)+\F(\alpha)\F(\gamma\oplus\alpha).
\end{eqnarray*}
and hence $\F$ is a FRF with zero determinant.
\end{exmpl}

\begin{exmpl} \label{ex:dQ}
Suppose that $a$ and $b$ are elements of a commutative ring $R$.  The function $m_{a,b} \co \whq \to R$ given by $p/q \mapsto a^pb^q$ is multiplicative in the sense of Example \ref{ex:mult}.  Therefore $m_{a,b}$ is an FRF with zero determinant.  

An important FRF of this type is $d_\calQ \co \whq \to \Z[x]$ defined by $d_\calQ (p/q) = (-1)^p x^q$.  This FRF becomes important in later examples and applications.
\end{exmpl}

\begin{rem} \label{rem:group} If $d=0$ then the ring structure of $R$ is not used and it is possible to take $R$ to be a group.  If the group is written additively, then the {\em multiplicative} condition from Example \ref{ex:mult} becomes an additive condition $\F(\gamma \oplus \alpha) = \F(\gamma)+\F(\alpha)$.
\end{rem}

\begin{exmpl}
In  \cite{series}, Series defines the mod 2 equivalence of an element of $\widehat{\Q}$.  She uses this definition to classify primitive elements of the free group of rank 2.  For an integer $p$, let $\bar{p}$ be its class in $\Z_2$, the integers modulo 2.  The map $\phi \co \Z \times \Z \to \Z_2 \times \Z_2$ given by $\phi(p,q) = (\bar{p},\bar{q})$ is a group homomorphism and a Farey sum $p/q \oplus r/s$ corresponds to the group operation $(p,q)+(r,s)$ in $\Z \times \Z$.   Hence, Remark \ref{rem:group} applies in this situation and the function $B \co \whq \to \Z/2\Z \times \Z/2\Z$ defined by $p/q \mapsto (\bar{p}, \bar{q})$ is an FRF with zero determinant. 
\end{exmpl}


\section{ Farey triples determine Farey recursive functions}\label{s:3}

It is natural to wonder how common FRFs are.  One worry is that, because there are multiple paths down the edges of $\calD$ to a given vertex, the recursion condition may be too much to ask for.  Interestingly, the existence of unique paths to vertices is not needed to successfully define an FRF inductively from a set of initial values.  In fact, the following uniqueness statement suffices.  Similarly to the situation with linear recurrences, this will provide an easy way to define an FRF from given functions $d_1$ and $d_2$ and a triple of initial conditions.

\begin{lem}\label{unique_triple}
For every $p/q \in \Q$ there exists a unique Farey pair $x/y,\,z/w \in \qh$ such that
\begin{equation}\frac pq = \frac xy \oplus^2 \frac zw. \label{e0}\end{equation}
\end{lem}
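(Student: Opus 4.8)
The plan is to obtain this from Corollary~\ref{cor:unique} (uniqueness of parents) by applying it twice, the glue being one combinatorial fact about the Stern--Brocot tree. Setting $\mu = x/y \oplus z/w$ and using (\ref{eq:repeat}), the equation (\ref{e0}) reads $p/q = \mu \oplus z/w$, and by Property~\ref{ppty:triple} the pair $\mu, z/w$ is again a Farey pair. Thus any solution of (\ref{e0}) encodes two nested parent decompositions: $z/w$ is one parent of $p/q$ (with $\mu$ the other), and at the same time $z/w$ is a parent of $\mu$. The engine for both existence and uniqueness will be the following \textbf{structural fact}: if $\gamma_L, \gamma_R$ are the parents of $p/q$ supplied by Corollary~\ref{cor:unique}, then exactly one of $\gamma_L, \gamma_R$ is itself a parent of the other.

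Granting the structural fact, existence is immediate. Say $\gamma_L$ is a parent of $\gamma_R$, and let $\delta$ be the other parent of $\gamma_R$, so that $\gamma_R = \delta \oplus \gamma_L$ by Corollary~\ref{cor:unique}; then $\delta,\gamma_L$ is a Farey pair and, by (\ref{eq:repeat}),
\[ \delta \oplus^2 \gamma_L = (\delta \oplus \gamma_L) \oplus \gamma_L = \gamma_R \oplus \gamma_L = p/q, \]
so $(x/y, z/w) = (\delta, \gamma_L)$ solves (\ref{e0}), the opposite case being symmetric. For uniqueness, suppose $(x/y,z/w)$ solves (\ref{e0}) and form $\mu$ as above. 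Corollary~\ref{cor:unique} applied to $p/q = \mu \oplus z/w$ forces $\{\mu, z/w\} = \{\gamma_L, \gamma_R\}$, while Corollary~\ref{cor:unique} applied to $\mu = x/y \oplus z/w$ shows $z/w$ is a parent of $\mu$. The structural fact says only one of $\gamma_L, \gamma_R$ can be the parent $z/w$, so $z/w$ is determined, then $\mu$ is the other element of $\{\gamma_L,\gamma_R\}$, and finally $x/y$ is the remaining parent of $\mu$; all are forced.

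The real work, and what I expect to be the main obstacle, is the structural fact. I would prove it using that $P_n$ is exactly the set of fractions with denominator at most $n$ (Farey's conjecture, established above), so that a fraction's denominator equals the stage at which it is born. For $p/q \notin \Z$ both parents are genuine rationals with denominators $b,d \ge 1$ and $b+d=q$; these parents are consecutive in $P_{q-1}$. If $d < b$, then $\gamma_L$ is born at stage $b$ with $\gamma_R$ as its right-hand neighbor in $P_{q-1}$, and since any fraction inserted to the right of $\gamma_L$ after stage $b$ has denominator exceeding $b > d$, the neighbor $\gamma_R$ cannot be such a late insertion and must already be $\gamma_L$'s right parent; the case $b<d$ is symmetric. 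The leftover possibility $b=d$ forces $b=d=1$, so $p/q$ is a half-integer with consecutive integer parents $n, n+1$, and here I would argue directly from the integer parent structure ($n/1$ has parents $1/0$ and $(n-1)/1$) that $n$ is a parent of $n+1$ but not conversely. The case $p/q \in \Z$, where one parent is the vertex $1/0$, is handled by this same integer structure. The delicate points are precisely the vertex $1/0$ and the exceptional simultaneous layer $P_1$, so I would treat those explicitly rather than trying to fold them into the denominator/birth-order argument.
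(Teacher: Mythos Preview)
Your argument is correct and takes a genuinely different route from the paper's. The paper proceeds by direct denominator arithmetic: it splits into cases $q=1$, $q=2$, $q\ge 3$, writes down the explicit candidate $x/y=(r'-r)/(s'-s)$, $z/w=r/s$ (where $r/s,r'/s'$ are the parents with $s<s'$), and checks existence and uniqueness by comparing denominators. You instead factor the whole argument through a clean \emph{structural fact} about the Stern--Brocot tree (exactly one parent of $p/q$ is a parent of the other) and then invoke Corollary~\ref{cor:unique} twice. The two proofs rest on the same observation---the parent with the smaller denominator must be $z/w$---but your packaging makes the logical structure more transparent and unifies the $q=1,2$ cases with the generic one. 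The cost is that you must still do the denominator/birth-stage bookkeeping inside the proof of the structural fact (including the $b=d=1$ and $1/0$ edge cases you flag), so the total work is comparable.

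Two small points to tighten. First, when you apply Corollary~\ref{cor:unique} to $\mu=x/y\oplus z/w$ you need $\mu\in\Q$; this holds because $y,w\ge 0$ and a Farey pair cannot have $y=w=0$, but you should say so. Second, the paper's formal definition of ``parents'' only covers vertices born at stage $\ge 2$, so for integers you are really using Corollary~\ref{cor:unique} as a surrogate; your sketch acknowledges this, but in a final write-up you should make the identification explicit rather than relying on the word ``parent'' throughout.
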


\begin{proof}

When $q=1$ the desired decomposition in (\ref{e0}) forces
\[\frac p1 = \frac{p-2}1\oplus\frac10\oplus\frac10.\]
Since $(p-2)/1$ and $1/0$ make a Farey pair, this is a valid and unique decomposition. 

Next, consider the case when $q=2$. There are two possibilities: $y=2$, or $w=1$. 
If $y=2$, then $w=0$ which forces $z=1$. This is not a possible decompositon since $(p-2)/2$ and $1/0$ are not a Farey pair. Hence we must have $w=1$ which forces the decomposition
\[\frac p2 = \frac10\oplus \frac{(p-1)/2}1\oplus \frac{(p-1)/2}1.\]
Note that $q=2$ means $p$ is odd so $(p-1)/2 \in \Z$ showing the decomposition is valid and unique. 

Assume now $q\geq 3$.  If $\gamma_L = r/s$ and $\gamma_R=r'/s'$ are the parents of $p/q$, then Corollary \ref{cor:unique} shows that they are the unique Farey pair of rational numbers so that $p=r+r'$ and $q=s+s'$.  Suppose first that $s=s'$. Because $r/s$ and $r'/s$ make a Farey pair, $rs-r's=s(r-r')=\pm 1$. Thus $s=1$ implying $q=2$ which is false.  Therefore, without loss of generality, we may assume $s<s'$. Notice that $(r'-r)/(s'-s)$ and $r/s$ are a Farey pair and moreover, 
\[\frac{p}{q}=\frac{r'-r}{s'-s}\oplus \frac{r}{s}\oplus \frac{r}{s}\] 
giving the existence of the decomposition.
To see uniqueness, assume there is another Farey pair $x/y$, $z/w$ with 
\[\frac{p}{q}=\frac{r'-r}{s'-s}\oplus \frac{r}{s}\oplus \frac{r}{s}=\frac{x}{y}\oplus \frac{z}{w}\oplus \frac{z}{w}.\]
Then $x/y\oplus z/w$ and $z/w$ form a Farey pair whose sum is $p/q$ showing that it is the same as the pair $r/s$ and $r'/s'$. Since $y+w>w$ ($y$ cannot be zero since $q\geq3$), this forces $z/w = r/s$ and $(x+z)/(y+w)=r'/s'$ resulting in $x/y = (r'-r)/(s'-s)$.
\end{proof}

\begin{rem} \label{rem:size}
Suppose $p/q \in \Q$ and $q\geq 2$.  Let $x/y,z/w \in \Q$ as given by Lemma \ref{unique_triple}.  Since $z/w$ must be a Farey partner for $p/q$ and the only Farey partners for $1/0$ are integers, it must be true that $w>0$.  So, because $q=y+2w$, $q$ is larger than $y$, $w$ and $y+w$.
\end{rem}

The next two theorems show that Farey recursive functions (FRFs) are easy to construct using initial values as with the Fibonacci numbers. Remark \ref{rem:size} makes it possible to define FRFs inductively.   Indeed, Theorem \ref{thm:kenton} shows that,  if $d_1$ and $d_2$ are arbitrary functions from $\whq$ to a commutative ring $R$, $d_1(1/0)$ is invertible in $R$ and $a,b,c \in R$, then there is a $(d_1,d_2)$-FRF $\F$ on $\whq$ which maps the triple $(1/0, 1/1,0/1)$ to the triple $(a,b,c)$.  In Theorem \ref{thm:shortQ} the hypothesis that $d_1(1/0)$ is invertible in $R$, is removed. The resulting function is still a $(d_1,d_2)$-FRF but it is only defined  on a smaller portion of $\whq$.  

\begin{thm} \label{thm:kenton} Suppose that $a$, $b$, and $c$ are elements of a commutative ring $R$ and $d_1$ and $d_2$ are functions from $\whq$ to  $R$.  If $d_1(1/0)$ is invertible in $R$ then there is a unique $(d_1,d_2)$-Farey recursive function $\F\co\whq \to R$ with $\F\left(0/1 \right)=a$,  $\F\left(1/0 \right)=b$, and  $\F\left(1 \right)=c$.
\label{thm:main}
\end{thm}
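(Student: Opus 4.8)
The plan is to reinterpret the defining condition (\ref{eq:FRFdef}) as a recipe that computes one value of $\F$ at a time, organized by the denominator of the argument. The governing observation is that a pair $(\alpha,\gamma)$ with $\gamma \in \partial(\alpha)$ is the same thing as an ordered Farey pair $(\gamma,\alpha)$, and by Lemma \ref{unique_triple} the assignment $(\gamma,\alpha)\mapsto \gamma\oplus^2\alpha$ is a bijection from the set of ordered Farey pairs onto $\Q$. It is injective by the uniqueness half of the lemma, and its image misses $1/0$ because $\gamma\oplus^2\alpha = 1/0$ would force both denominators of $\gamma$ and $\alpha$ to vanish, contradicting that they form a Farey pair. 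Consequently every $\beta\in\Q$ is the left-hand side of \emph{exactly one} instance of (\ref{eq:FRFdef}), namely the one coming from the unique Farey pair $(\gamma,\alpha)$ with $\beta=\gamma\oplus^2\alpha$, while $1/0$ is never a left-hand side. So the FRF equations form a triangular, non-redundant system: being a $(d_1,d_2)$-FRF is equivalent to imposing $\F(1/0)=b$ together with one equation per rational, and both existence and uniqueness will follow once I show these equations can be solved consistently and are forced by the initial data.

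I would then build $\F$ in layers indexed by the denominator $q$. Set $\F(1/0)=b$. The integers ($q=1$) are the delicate layer and I expect them to be the main obstacle: the canonical decomposition of $n/1$ is $n/1=(n-2)/1\oplus^2 1/0$, so its equation reduces $\F(n/1)$ only to \emph{other integers}, with no drop in denominator. Equivalently, the integers form a single orbit governed by the $\alpha=1/0$ instances of (\ref{eq:FRFdef}), which read
\[
\F((n+2)/1)=-d_1(1/0)\,\F(n/1)+d_2(1/0)\,\F((n+1)/1),\qquad n\in\Z.
\]
This is one bi-infinite second-order linear recurrence. The two prescribed values $\F(0/1)=a$ and $\F(1/1)=c$ determine all $\F(n/1)$ for $n\ge 0$ directly, and determine $\F(n/1)$ for $n<0$ by running the recurrence backwards via $\F(n/1)=d_1(1/0)^{-1}\big(d_2(1/0)\F((n+1)/1)-\F((n+2)/1)\big)$. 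This is the single place the hypothesis is used, and it is exactly what lets the integer sequence extend in the negative direction, explaining why dropping invertibility (Theorem \ref{thm:shortQ}) costs the values on part of $\whq$.

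With $1/0$ and all integers in hand, I would define $\F$ on arguments of denominator $q\ge 2$ by induction on $q$ using the canonical equation. By Lemma \ref{unique_triple} and Remark \ref{rem:size}, if $\beta=p/q$ decomposes as $\beta=\gamma\oplus^2\alpha$ with $\gamma=x/y$, then $\gamma$ and $\gamma\oplus\alpha=(x+z)/(y+w)$ have denominators $y<q$ and $y+w<q$, so $\F(\gamma)$ and $\F(\gamma\oplus\alpha)$ are already defined and I set $\F(\beta)=-d_1(\alpha)\F(\gamma)+d_2(\alpha)\F(\gamma\oplus\alpha)$. Here there is no need for invertibility, since the denominators strictly decrease and the recursion is automatically well-founded, bottoming out in the integer and $1/0$ layers.

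It remains to read off both conclusions from this construction. For existence, $\F$ satisfies its canonical equation for every $\beta\in\Q$ by design, and since the bijection of the first paragraph shows the canonical equations are precisely all instances of (\ref{eq:FRFdef}), $\F$ is a $(d_1,d_2)$-FRF with the required initial values. For uniqueness, any FRF with these initial values must obey the integer recurrence, which together with invertibility of $d_1(1/0)$ pins down every integer value, after which the canonical equations force every value with $q\ge 2$ by induction on $q$. The only nontrivial inputs — that $\gamma\oplus^2\alpha$ never equals $1/0$ and that the reduced arguments have strictly smaller denominator — are immediate from the Farey-pair bookkeeping already assembled in Lemma \ref{unique_triple} and Remark \ref{rem:size}, so the entire argument rests on isolating the integer layer as the sole obstruction and resolving it through the invertibility of $d_1(1/0)$.
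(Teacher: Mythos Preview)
Your proposal is correct and follows essentially the same route as the paper: induct on the denominator, handle the integer layer via the bi-infinite second-order recurrence with $\alpha=1/0$ (using invertibility of $d_1(1/0)$ to run backwards), and then for $q\ge 2$ invoke Lemma~\ref{unique_triple} and Remark~\ref{rem:size} to reduce to strictly smaller denominators. Your explicit framing of the bijection between ordered Farey pairs and $\Q$ is a clean way to see that the FRF conditions form a non-redundant triangular system, which makes uniqueness immediate; the paper arrives at the same conclusion but phrases it simply as ``by uniqueness of the pair $\gamma,\alpha$, $\F$ is well defined.''
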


\begin{proof}
$\F(p/q)$ will be defined inductively on the denominator $q$, that is, on $n$ in the Farey sequences $F_n$, $n\geq 1$.  First, define $\F(0) = a$, $\F(0/1) = b$ and $\F(1) = c$ and let $M= \left[ \begin{smallmatrix}0&1\\-d_1(1/0)&d_2(1/0) \end{smallmatrix} \right]$.
Since $d_1(1/0)$ is invertible in $R$, $M$ is invertible in $M_2(R)$. For $q=1$ we saw in the proof of (\ref{unique_triple}) that a decomposition $p/1 = \gamma\oplus^2\alpha$ forces $\alpha = 1/0$. Since $\F(0)$ and $\F(1)$ are already defined and $M$ is invertible, we can satisfy Equation (\ref{eq:FRFdef}) in the definition of an FRF by defining $\F(p/1)$ using the second order linear recursion matrix $M$. That is, define
\[ \begin{bmatrix}\F(p-1)\\ \F(p)\end{bmatrix}  = M^{p-1}\begin{bmatrix}\F(0)\\ \F(1)\end{bmatrix}\]
for all $p \in \Z$. Indeed, for $p/1 = (p-2)/1\oplus^21/0 \in F_1$, the definition gives $\F(p/1) = -d_1(1/0)\F(p-2) +d_2(1/0)\F(p-1)$ which is Equation (\ref{eq:FRFdef}).

Now, assume for some $n \in \mathbb{N}$, $\F$ is defined on the Farey sequence $F_n$ and if $r/s \in F_n$ with $r/s = \gamma\oplus^2 \alpha$, then Equation (\ref{eq:FRFdef}) holds.  Let $p/q \in F_{n+1}-F_n$ and, using Lemma \ref{unique_triple}, fix the unique $\gamma=x/y$, $\alpha = z/w \in \Q$ satisfying $p/q = \gamma\oplus^2\alpha$.   By Remark \ref{rem:size}, $\gamma$, $\alpha$, and $\gamma \oplus \alpha$ are all elements of $F_n$.  Therefore, $\F(p/q)$ can (and must) be defined by
\[ \F(p/q) = -d_1(\alpha) \F(\gamma) + d_2(\alpha) \F(\gamma \oplus \alpha).\]
By uniqueness of the pair $\gamma, \alpha$, $\F$ is well defined and for all $r/s \in F_{n+1}$ Equation (\ref{eq:FRFdef}) holds. This inductive definition for $\F$ proves the theorem.
\end{proof}

Define $\Q_+=\Q \cap [0,\infty)$ and $\whq_+= \Q_+ \cup \{ 1/0\}$.

\begin{thm}\label{thm:shortQ} If $a$, $b$, and $c$ are elements of a commutative ring $R$ and $d_1$ and $d_2$ are functions from $\whq_+$ to $R$, then there is a unique $(d_1,d_2)$-Farey recursive function $\F\co\whq_+ \to R$ with $\F\left(0/1 \right)=a$,  $\F\left(1/0 \right)=b$, and  $\F\left(1 \right)=c$.
\label{thm:main2}
\end{thm}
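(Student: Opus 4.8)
The plan is to mirror the inductive construction in the proof of Theorem~\ref{thm:kenton}, building $\F$ by induction on the denominator $q$ of $p/q$, but carrying out the entire construction inside $\whq_+$. The only structural reason the invertibility of $d_1(1/0)$ was needed before is that defining $\F$ on all of $\Z$ required applying the recursion matrix $M = \left[\begin{smallmatrix} 0 & 1 \\ -d_1(1/0) & d_2(1/0)\end{smallmatrix}\right]$ in both directions, i.e. using $M^{-1}$ to reach the negative integers. Once the domain is cut down to $\whq_+$, I only ever move toward larger non-negative integers, so only nonnegative powers of $M$ occur and no inverse is required.

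The crux of the argument is a positivity statement for the decomposition of Lemma~\ref{unique_triple}: if $p/q \in \Q_+$ with $q \geq 2$ and $p/q = \gamma \oplus^2 \alpha$ is its unique Farey decomposition, then $\gamma$, $\alpha$, and $\gamma \oplus \alpha$ all lie in $\whq_+$. When $q = 2$ the explicit decomposition in Lemma~\ref{unique_triple} gives $\gamma = 1/0$ and $\alpha = (p-1)/2 \in \Z_{\geq 0}$, with $\gamma \oplus \alpha = (p+1)/2 \in \Z_{\geq 0}$, so all three lie in $\whq_+$. For $q \geq 3$, note first that $\alpha$ and $\gamma \oplus \alpha$ are exactly the two parents $\gl, \gr$ of $p/q$: since $p/q = (\gamma\oplus\alpha)\oplus\alpha$, Corollary~\ref{cor:unique} identifies the Farey pair $\{\alpha,\gamma\oplus\alpha\}$ with the parents. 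Because $p/q > 0$ lies in an open interval $(k,k+1)$ with $k \geq 0$ and the Stern--Brocot mediant subdivision never leaves a bracketing interval, both parents lie in $[k,k+1]\subseteq[0,\infty)$, so $\alpha,\gamma\oplus\alpha \in \Q_+$. It remains to check $\gamma \geq 0$. Writing the parents as a Farey pair $r/s < p/q < r'/s'$ (so $r's - rs' = 1$), the element $\gamma$ equals $(r'-r)/(s'-s)$ or $(r-r')/(s-s')$ according to which parent has the smaller denominator; a one-line computation using $r's-rs'=1$, together with the observation that $r/s=0$ forces $s=1$, shows the resulting numerator is nonnegative, whence $\gamma \in \Q_+$.

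With this in hand the construction proceeds as in Theorem~\ref{thm:kenton}. I set $\F(0/1)=a$, $\F(1/0)=b$, $\F(1)=c$, then define $\F(p)$ for integers $p \geq 2$ by the forward recursion $\F(p) = -d_1(1/0)\F(p-2)+d_2(1/0)\F(p-1)$, which is legitimate since $p-2,p-1 \geq 0$ are already defined; this realizes Equation~(\ref{eq:FRFdef}) for the decomposition $p/1 = (p-2)/1 \oplus^2 1/0$ and uses no inverse of $M$. Then, inductively on $n$, for each $p/q \in (F_{n+1}-F_n)\cap\whq_+$ I take the unique decomposition $p/q=\gamma\oplus^2\alpha$ and set $\F(p/q) = -d_1(\alpha)\F(\gamma)+d_2(\alpha)\F(\gamma\oplus\alpha)$. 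By the positivity statement together with Remark~\ref{rem:size}, the three points $\gamma,\alpha,\gamma\oplus\alpha$ lie in $\whq_+$ and have strictly smaller denominator, so the right-hand side is already defined. Uniqueness of the decomposition (Lemma~\ref{unique_triple}) makes $\F$ well defined and forces every value, yielding both existence and uniqueness of the FRF.

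The main obstacle I expect is the positivity statement for $q \geq 3$, specifically verifying $\gamma \geq 0$: that the parents lie in $[k,k+1]$ is geometrically transparent, but $\gamma$ is not a parent and must be controlled through the Farey determinant relation. Everything else is a direct adaptation of Theorem~\ref{thm:kenton}, with the forward-only recursion on the integers replacing the bi-infinite one.
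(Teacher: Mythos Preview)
Your proposal is correct and follows the same route as the paper: the paper's proof is a single sentence saying the argument of Theorem~\ref{thm:kenton} carries over verbatim once one drops the extension to the negative integers, so invertibility of $d_1(1/0)$ is no longer needed. You are in fact more careful than the paper, since the positivity statement you isolate---that for $p/q\in\Q_+$ the pieces $\gamma$, $\alpha$, $\gamma\oplus\alpha$ of the unique decomposition from Lemma~\ref{unique_triple} remain in $\whq_+$---is a genuine point the paper leaves implicit, and your verification of it (via the Farey relation $r's-rs'=1$ and the observation that parents of a positive non-integer rational are nonnegative) is sound.
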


\begin{proof} This follows the same proof as Theorem \ref{thm:kenton}, but now there is no need extend to the negative integers, so the condition that $d_1(1/0)$ is invertible is unnecessary.
\end{proof}

\begin{cor} \label{cor:main}
If $a$, $b$, and $c$ are elements of a commutative ring $R$ and $d \co \whq \to R$ is a function for which $d(1/0)$ is invertible in $R$, then there is a unique Farey recursive function $\F\co\whq \to R$ with determinant $d$ with $\F\left(1/0 \right)=a$,  $\F\left(1/0 \right)=b$, and  $\F\left(1/1 \right)=c$.
\end{cor}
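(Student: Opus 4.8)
The plan is to read this corollary as the special case of Theorem~\ref{thm:kenton} in which $d_2 = \F$. Recall that a Farey recursive function with determinant $d$ is precisely a $(d_1,d_2)$-FRF with $d_1 = d$ and $d_2 = \F$, so the defining relation reads $\F(\gamma \oplus^2 \alpha) = -d(\alpha)\F(\gamma) + \F(\alpha)\F(\gamma\oplus\alpha)$. The only feature distinguishing this from Theorem~\ref{thm:kenton} is that the coefficient $d_2(\alpha) = \F(\alpha)$ is no longer a function fixed in advance; it is a value of the very function being constructed. I would therefore repeat the inductive construction of Theorem~\ref{thm:kenton} verbatim, but supply the coefficient $\F(\alpha)$ from the partially defined function at each stage of the induction.

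First I would dispatch the base case. For $p/1 \in F_1$ the recursion is governed by the matrix $M = \left[\begin{smallmatrix} 0 & 1 \\ -d(1/0) & \F(1/0)\end{smallmatrix}\right]$, whose lower right entry is now forced to equal $b$ by the prescribed value $\F(1/0) = b$. Since $\det M = d(1/0)$ is invertible by hypothesis, $M$ is invertible in $M_2(R)$, and the integer values are defined by $\begin{bmatrix}\F(p-1)\\\F(p)\end{bmatrix} = M^{\,p-1}\begin{bmatrix}\F(0)\\\F(1)\end{bmatrix}$ for all $p \in \Z$, exactly as before. This is the single place the invertibility of $d(1/0)$ is used, and it plays the same role it did in Theorem~\ref{thm:kenton}. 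I would then induct on the denominator: given $p/q \in F_{n+1}-F_n$, Lemma~\ref{unique_triple} yields the unique Farey pair $\gamma,\alpha$ with $p/q = \gamma\oplus^2\alpha$, and Remark~\ref{rem:size} places $\gamma$, $\alpha$, and $\gamma\oplus\alpha$ all in $F_n$. Thus $\F(\alpha)$ has already been defined, and I may set $\F(p/q) = -d(\alpha)\F(\gamma) + \F(\alpha)\F(\gamma\oplus\alpha)$. Uniqueness follows as in Theorem~\ref{thm:kenton}, since each rational is the target of exactly one instance of the defining relation.

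The step I would flag as the crux is verifying that the choice $d_2 = \F$ introduces no circularity. This is settled completely by Remark~\ref{rem:size}: writing $\alpha = z/w$, the identity $q = y + 2w$ forces $w < q$, so the coefficient $\F(\alpha)$ appearing in the formula for $\F(p/q)$ is always a value at a strictly smaller denominator. Hence the recursion is well founded, every value of $\F$ on the right side of the defining relation is already available when it is needed, and the construction of Theorem~\ref{thm:kenton} transfers without change once $d_2$ is read off the partially built $\F$ at each stage.
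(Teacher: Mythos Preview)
Your proposal is correct and follows essentially the same approach as the paper: both re-run the inductive construction from Theorem~\ref{thm:kenton} with $d_2 = \F$, observing that at the moment $\F(p/q)$ is defined, the coefficient $\F(\alpha)$ is already available because $\alpha$ has strictly smaller denominator (by Remark~\ref{rem:size} for $q\geq 2$, and $\alpha = 1/0$ in the base case $q=1$). Your write-up is more explicit than the paper's, but the argument is the same.
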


\begin{proof} This follows from the proof of Theorem \ref{thm:main}.  Use $d_2=\F$ and follow the proof, noting that for $p/q =\gamma\oplus^2\alpha$, Equation (\ref{eq:FRFdef}) only requires $d_2$ to be evaluated at $\alpha$. For $q\geq 2$, $\alpha \in F_{q-1}$ and for $q=1$, $\alpha =1/0$. Hence $d_2(\alpha) = \F(\alpha)$ is already defined when using induction to define $\F(p/q)$.
\end{proof}

\section{More examples} \label{sec:examples} 

Armed with Theorems \ref{thm:kenton} and \ref{thm:shortQ} and Corollary \ref{cor:main}, we can discuss more examples.  

\begin{exmpl}[Fibonacci numbers] \label{ex:Fib}
Define $F \co \whq \to \Z$ to be the Farey recursive function with constant determinant $d=-1$ and initial conditions $F(0/1)=0$, $F(1/0)=1$ and $F(1/1)=1$.  Since $F(n)$ is the $n^\text{th}$ Fibonacci number, $F$ is a Farey recursive extension of the Fibonacci sequence to the extended rationals.
\end{exmpl}

\begin{exmpl}[Generic FRFs] As defined in \cite{C}, the {\em generic $(d_1,d_2)$-FRF} is $\mathcal U\co\whq \to\Z[x,y,z]$ defined by $d_1$, $d_2$ and the triple $x,y,z$. Every FRF factors through $\mathcal U$. That is, given a $(d_1,d_2)$-FRF $\F\co\whq \to R$, $\F = f\circ \mathcal U$ where $f\co\Z[x,y,z] \to R$ is defined by the ring homomorphism sending $x \mapsto \F(0/1)$, $y \mapsto \F(1/0)$ and $z \mapsto \F(1/1)$.

The {\em generic FRF with determinant $d$} is the FRF $\mathcal{U}_d \co \whq \to Z[x,y,z]$ as defined above with determinant $d$ (i.e., as usual $d_1=d$ and $d_2 = \mathcal U$).
As above, every FRF with image in a ring $R$ and determinant $d$ is a specialization of $\mathcal U_d$.
\end{exmpl}

The following examples have applications in low-dimensional topology.  Some terminology from topology appears below without explanation - sensible definitions would be a substantial diversion.  Interested readers will be directed to other references for more details.

\begin{exmpl}[Traces of matrices] \label{ex:U} The generic FRF with constant determinant 1 is particularly useful because the polynomials in the image of $\mathcal U_1$ can be used to calculate the traces of certain $2 \times 2$ matrices.

Let $G$ be the fundamental group for a once punctured torus with marked generators of slopes $1/0$ and $0/1$.  Suppose that $\rho \co G \to \text{SL}_2\C$ is a group homomorphism.  Take $x$, $y$, and $z$ to be the traces of the matrices $\rho(a)$, $\rho(b)$, and $\rho(c)$ where $a$, $b$, and $c$ are primitive elements of $G$ with respective slopes $0/1$, $1/0$, and $1/1$.  Then, as shown in \cite{C}, the value of the specialization of $\mathcal U_1 (p/q)$ is the trace of $\rho(g)$ where $g$ is a primitive element of $G$ with slope $p/q$.
\end{exmpl}

\begin{exmpl}[Markov numbers] Another specialization of $\mathcal U_1$ is related to the Markov numbers (see \cite{A} for a thorough introduction to Markov numbers).  An integer is called a {\em Markov number} if it is part of an integer solution to the equation
\[ x^2+y^2+z^2 = 3xyz.\]
Let $\mathcal M$ be the FRF with determinant one obtained from $\mathcal U_1$ by setting $x=y=z=3$.  It follows from Section 1 of \cite{Bow} that $\frac 13 \, \mathcal M\left(\Q\cap[0,1]\right)$ is the set of Markov numbers.
\end{exmpl}

The next few examples  are relevant to the study of a class of topological objects called {\em two bridge links} (see \cite{Ad} and \cite{BZH}).  An element $p/q \in \whq$ determines an embedding of either a loop or pair of loops into $\R^3$.   Figure \ref{fig:47knot} shows the embedding for $4/7$.  This comes from drawing the slope $p/q$ arcs emanating from the corners of a square pillowcase in $\R^3$ and connecting the left corners of the pillow with one arc and the right corners with another (as shown in the figure).  This link (or knot) is referred to as the two bridge link $L(p/q)$.   This rich class of links have been long studied by mathematicians.  In \cite{Sch}, Schubert described a connection between the two bridge links, continued fractions, and the Farey graph.  This important relationship was, in particular, used in the famous papers \cite{HT} and \cite{SW} to establish fundamental topological properties of these links.  This set of links continues to provide an important class of examples of spaces in low dimensional topology and geometry.

\begin{figure}[ht] 
\centering
\includegraphics[width =2.5in]{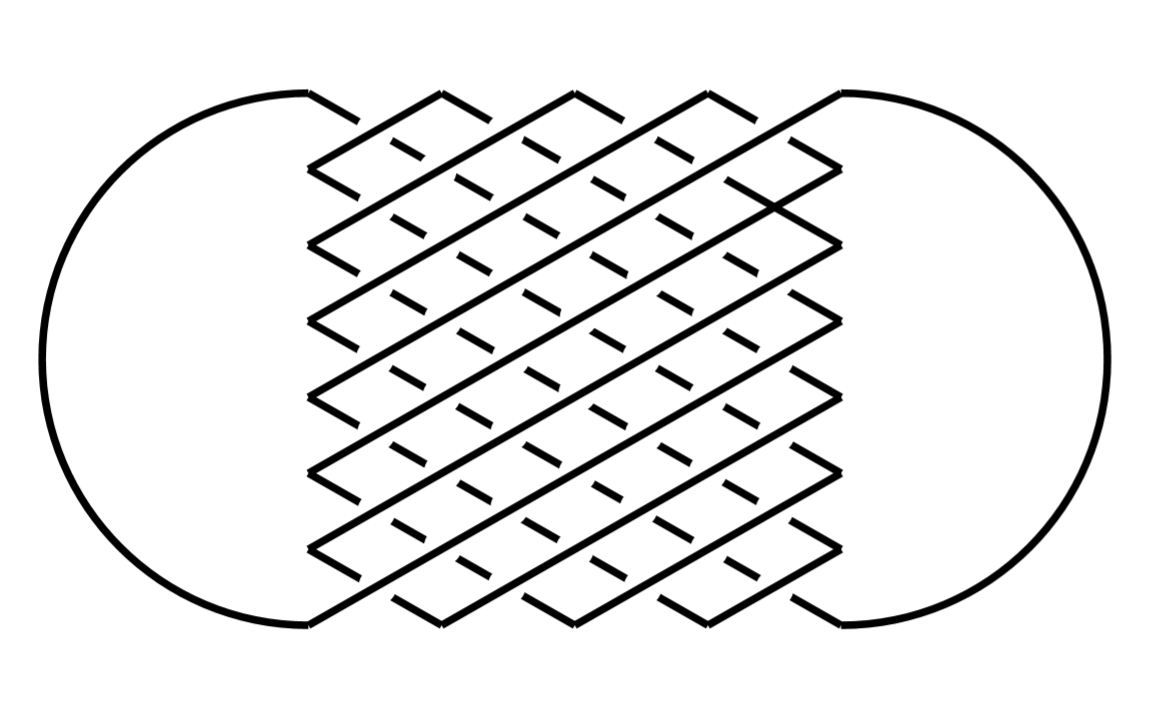}
\caption{The knot $L(4/7)$.}
\label{fig:47knot}
\end{figure}

\begin{exmpl}[Character varieties and Riley polynomials for 2-bridge links] 
From \cite{C}, let $\mathcal T \co \whq \to \Z[x,z]$ be the FRF with determinant one obtained from $\mathcal U_1$ by setting $y=0$.  The main theorems of \cite{C} show that the affine set \[\{ (x_0,z_0) \in \C^2 \, | \, \mathcal T (p/q)(x_0,z_0)=0\}\] corresponds to the set of homomorphisms from the fundamental group for the link $L(p/q)$ into $\text{PSL}_2 \C$.  

This projective matrix group is especially relevant here for geometric reasons.   By some measures, the simplest two bridge links can be drawn on the surface of a torus.  For those who cannot, their complements are examples of {\em hyperbolic manifolds}, 3-dimensional spaces which have natural geometric structures modeled on 3-dimensional hyperbolic space $\mathbb{H}^3$.  In fact, every two bridge link is represented by an element $p/q$ of $\whq_0$ and, by William Thurston's celebrated geometrization theorems (see for example \cite{Kap}, \cite{Mor}, \cite{Otal}, and \cite{Th}) these are {\em hyperbolic} precisely when $p\notin{1,q-1}$.   The isometry group for $\mathbb{H}^3$ can be identified with $\text{PSL}_2 \C$ and this guarantees that, for the hyperbolic two bridge links there is always an isomorphism from the link group to a subgroup of $\text{PSL}_2 \C$.  In fact, this isomorphism can be obtained directly from a root of the one variable {\em  Riley polynomial} for $L(p/q)$ (see \cite{RR} and \cite{RR2}).  From \cite{C}, the Riley polynomial for $L(p/q)$ is a specialization of $\mathcal T(p,q)$.
\end{exmpl}

\begin{exmpl}[Geometry of 2-bridge links]
Suppose that $L(p/q)$ is a hyperbolic two bridge link.  The main result of \cite{SW} constructs a triangulation for the complement of $L(p/q)$ by ideal tetrahedra.  It was proven independently in \cite{PTGTKG} and \cite{GF} that these triangulations always carry the geometric structure for the link complements.  This reduces the problem of explicitly finding the geometry of $L(p/q)$ to solving a complicated system of multivariable polynomial equations.  It is shown in \cite{C2} that there is a FRF with image in $\Z[x]$, where the geometry of $L(p/q)$ comes from a root of the image of $p/q$ under this FRF. 

Let $d_\calQ \co \whq \to \Z[x]$ be the muliplicative FRF from Example \ref{ex:dQ}, $d_\calQ(p/q) = (-1)^px^q$, and define $\mathcal Q \co \whq \to \Z[x]$ be the FRF with determinant $d_Q$ and 
\begin{align*} \calQ(1/0)&=0 & \calQ(1/1)&=1 & \calQ(0/1)& =1. \end{align*}
Then, from \cite{C2}, the geometry of $L(p/q)$ corresponds to a root $\mathfrak{z}$ of $\calQ(p/q)$.  In particular, the geometric shapes of the tetrahedra in the triangulation given by Sakuma and Weeks in \cite{SW} correspond to complex numbers obtained by evaluating quotients of certain values of $\calQ$  at $\mathfrak{z}$.

The polynomials $\calQ(1/n)$ are closely related to the Chebyshev polynomials.  Recall that the Chebyshev polynomials of the second kind $\{ U_n (x)\}$ are defined by the second order linear recurrence relation $U_{n+1}=-U_{n-1}+2xU_n$ and initial conditions $U_0=0$ and $U_1=1$.  The Chebyshev polynomials of the fourth kind are given by $W_n=U_n+U_{n-1}$ and satisfy the same recurrence relation.   Using induction, it is easy to show that
\[ \calQ\left(\frac{1}{2n}\right)=x^{n-1} U_n\left(\frac{1-2x}{2x}\right) \quad \text{and} \quad \calQ\left(\frac{1}{2n-1}\right)=x^{n-1} W_n\left(\frac{1-2x}{2x}\right).\]
\end{exmpl}

\begin{rem}
It seems worth noticing that, in each of our examples, every FRF is a FRF with determinant $d$ where $d$ is a multiplicative FRF as in Example \ref{ex:mult}.
\end{rem}


\section{Wrapping sequences around triangles} \label{sec:wrap} 

For this section, assume that $R$ is a commutative ring and $d \co \whq \to R$ is a multiplicative function as defined in Example \ref{ex:mult}.  Assume also that the image of $d$ contains no zero divisors.  Let $\F \co \whq \to R$ be a Farey recursive function with determinant $d$.  This seems to be a natural setting.  In particular, all of the examples from Section \ref{sec:examples} have these properties.
For $\alpha \in \whq$, define
\[ M_\alpha = \begin{bmatrix} 0 & 1 \\ -d(\alpha) & \F(\alpha) \end{bmatrix}\]
and notice that for any Farey pair $\gamma,\,\alpha \in \whq$, by Equation (\ref{eq:FRFdef}),
\begin{equation}\label{eq:lin_recur}
\begin{bmatrix}\F(\gamma\oplus^n\alpha)\\\F(\gamma\oplus^{n+1}\alpha)\end{bmatrix} = M_\alpha^n\begin{bmatrix}\F(\gamma)\\\F(\gamma\oplus\alpha)\end{bmatrix}
\end{equation}
 for all $n\geq 0$. That is, if we restrict $\F$ to the boundary sequence for $\alpha$ containing $\gamma$, the result is linearly  recursive with recursion matrix $M_\alpha$.

To help motivate what we mean by wrapping around a boundary triangle, first recall that the recursion matrix for the Fibonacci numbers is $M=\left[ \begin{smallmatrix} 0&1\\1&1 \end{smallmatrix} \right]$.    Here, $M$ is invertible and $M^{-1}$ has integer entries.  This means that the sequence of Fibonacci numbers $\{f_n\}$ can be extended to a bi-infinite sequence by using $M^{-1}$ to move in the negative direction. For instance, $f_{-1}$ is defined by the equation
\[ \begin{bmatrix}f_{-1}\\f_{0}\end{bmatrix} = \begin{bmatrix}0&1\\1&1\end{bmatrix}^{-1}\,\begin{bmatrix}f_0\\f_1\end{bmatrix} = \begin{bmatrix}-1&1\\1&0\end{bmatrix}\,\begin{bmatrix}0\\1\end{bmatrix} = \begin{bmatrix}1\\0\end{bmatrix}.\]
The resulting bi-infinite sequence is linearly recursive with recursion matrix $M$.   A portion of this bi-infinite Fibonacci sequence is
\[\ldots,13,\,-8,\,5,\,-3,\,2,\,-1,\,1,\,0,\,1,\,1,\,2,\,3,\,5,\,8,\,13,\ldots\]
To summarize, if $f \co \Z_{\geq 0} \to \Z$ is the function which takes $n$ to the $n^\text{th}$ Fibonacci number, then $f$ has a unique linearly recursive extension to the bi-infinite extension $\Z$ of $\Z_{\geq 0}$.  We have in effect taken the Fibonacci numbers and wrapped them around the boundary ``triangle'' $\partial(1/0)$.

As shown in (\ref{eq:lin_recur}), for $\alpha \in \Q$, the restrictions of $\F$ to the boundary sequences $\partial_L(\alpha)$ and $\partial_R(\alpha)$ are linearly recursive with recursion matrix $M_\alpha$.  It is reasonable to concatenate these two boundary sequences to get a single bi-infinite sequence and to consider an analogy to the situation with the Fibonacci numbers described above.  If the sequence $\partial_L(\alpha)$ is reversed and juxtaposed with $\partial_R(\alpha)$, the set $\Delta(\alpha)=\partial_L(\alpha) \cup \partial_R(\alpha)$ becomes a bi-infinite sequence.  As such, write $\Delta(\alpha) = \{ \beta_j \}$ where $\beta_{-1}=\gamma_L$, $\beta_0=\gamma_R$, and for $k \in \mathbb{N}$,
\[ \beta_{-k} = \gamma_L \oplus^{k-1} \alpha \qquad \beta_k = \gamma_R \oplus^k \alpha\]
where $\gamma_L$ and $\gamma_R$ are the left and right corners for $\alpha$. 

 For example, if $\alpha=1/2$ then $\gamma_L=0$ and $\gamma_R=1$.  Furthermore,
\[ \ldots, \ \beta_{-3}=2/5, \ \beta_{-2} = 1/3, \ \beta_{-1}=0, \ \beta_0=1, \ \beta_1= 2/3, \ \beta_2=3/5, \ \ldots \]
Compare this to the portion of the Stern-Brocot diagram shown in Figure \ref{fig:triangles}.  Notice how the sequence $\{ \beta_j \}$ wraps around the triangle centered at $1/2$.

By (\ref{eq:lin_recur}),
\[ M_\alpha \begin{bmatrix} \beta_{j-1} \\ \beta_j \end{bmatrix} =  \begin{bmatrix} \beta_{j} \\ \beta_{j+1} \end{bmatrix}\]
holds for every $j \in \mathbb{N}$,  regardless of $\alpha \in \Q$.  What are conditions on $\F$ which guarantee that it holds for every $j \in \Z$?  The answer to this is the content of Theorem \ref{thm:recursion_formula} and Corollary \ref{cor:det1}.

The crux of the problem occurs at the corners $\beta_{-1}$ and $\beta_0$ for $\alpha$.  This motivates a comparison of  $M_\alpha^{-2} \left[ \begin{smallmatrix}  \F(\beta_0) \\ \F(\beta_{1}) \end{smallmatrix} \right]$ with the values of $\F$ at $\beta_{-1}$ and $\beta_{-2}$.  To start, notice that $\beta_{-1}$, $\alpha=\beta_{-1}\oplus\beta_0$, and $\beta_1=\beta_{-1}\oplus^2\beta_0$ are consecutive in $\partial_L(\beta_0)$ and that $\beta_0$, $\alpha = \beta_0\oplus\beta_{-1}$, and $\beta_{-2}= \beta_0\oplus^2\beta_{-1}$ are consecutive in $\partial_R(\beta_{-1})$.  Because $\F$ is Farey recursive and $d$ is multiplicative, these expressions give the following formulas
\begin{align}
\F(\beta_1) &= -d(\beta_0)\F(\beta_{-1}) + \F(\alpha) \F(\beta_0)  \label{eq:beta1}\\
\F(\beta_{-2}) &= -d(\beta_{-1}) \F(\beta_0) + \F(\alpha) \F(\beta_{-1}) \label{eq:beta-2} \\
d(\alpha) &= d(\beta_{-1}) d(\beta_0). \label{eq:dmult}
\end{align}
So, using Equation (\ref{eq:beta1}) in the last step, 
\begin{eqnarray}
M_\alpha^{-1} \, \begin{bmatrix} \F(\beta_0) \\ \F(\beta_{1}) \end{bmatrix} &=&  \frac{1}{d(\alpha)} \, \begin{bmatrix} \F(\alpha) & -1 \\ d(\alpha) & 0 \end{bmatrix} \,  \begin{bmatrix} \F(\beta_0) \\ \F(\beta_{1}) \end{bmatrix} \nonumber\\ \nonumber \\
&=&  \frac{1}{d(\alpha)} \, \begin{bmatrix} \F(\alpha) \F(\beta_0)-\F(\beta_1) \\ d(\alpha) \F(\beta_0) \end{bmatrix}\nonumber \\ \nonumber \\
&=& \begin{bmatrix} \frac{d(\beta_0)}{d(\alpha)} \F(\beta_{-1}) \\ \F(\beta_0) \end{bmatrix}. \label{eq:minus1}
\end{eqnarray}
Using Equations (\ref{eq:beta-2}) and (\ref{eq:dmult}),
\begin{eqnarray}
M_\alpha^{-2} \, \begin{bmatrix} \F(\beta_0) \\ \F(\beta_{1}) \end{bmatrix} &=& M_\alpha^{-1} \, \begin{bmatrix} \frac{d(\beta_0)}{d(\alpha)} \F(\beta_{-1}) \\ \F(\beta_0) \end{bmatrix} \nonumber \\ \nonumber \\
&=&\frac{1}{d(\alpha)} \, \begin{bmatrix} \F(\alpha) & -1 \\ d(\alpha) & 0 \end{bmatrix} \, \begin{bmatrix} \frac{d(\beta_0)}{d(\alpha)} \F(\beta_{-1}) \\ \F(\beta_0) \end{bmatrix}\nonumber \\ \nonumber \\
&=& \frac{1}{d(\alpha)} \, \begin{bmatrix} \frac{d(\beta_0)}{d(\alpha)} \F(\alpha) \F(\beta_{-1}) -\F(\beta_0) \\ d(\beta_0) \F(\beta_{-1}). \end{bmatrix} \nonumber\\ \nonumber \\
&=& \begin{bmatrix} \frac{1}{d(\beta_{-1})d(\alpha)} \left( \F(\alpha) \F(\beta_{-1}) - d(\beta_{-1}) \F(\beta_0) \right) \\ \frac{1}{d(\beta_{-1})} \F(\beta_{-1}) \end{bmatrix} \nonumber \\ \nonumber \\
&=& \begin{bmatrix} \frac{1}{d(\beta_{-1})d(\alpha)}  \F(\beta_{-2}) \\ \frac{1}{d(\beta_{-1})} \F(\beta_{-1}) \end{bmatrix} \label{eq:mtxcalc}
\end{eqnarray}
This provides the key step for the proof of the following theorem.

\begin{thm} \label{thm:recursion_formula}
Let $d \co \whq \to R$ be a multiplicative function to a commutative ring $R$ whose image contains no zero divisors.  Suppose $\F$ is a Farey recursive function with determinant $d$.  Given $\alpha \in \Q$, define $M_\alpha$ and $\{ \beta_j\}$ as above.  Then, for all $n \in \Z$,
\[
M_\alpha^n \begin{bmatrix}\F(\beta_0) \\ \F(\beta_1)\end{bmatrix} = \left\{
\begin{array}{ll}
\begin{bmatrix}\F(\beta_n)\\ \F(\beta_{n+1})\end{bmatrix}&n\geq 0\\ \\
\begin{bmatrix}\frac1{d(\beta_{-1})}\F(\beta_{-1}) \\ \F(\beta_0)\end{bmatrix} & n=-1\\ \\
\begin{bmatrix}\frac{1}{d(\beta_{-1})d(\alpha)^{-n-1}}\F(\beta_n)\\  \frac{1}{d(\beta_{-1})d(\alpha)^{-n-2}}\F(\beta_{n+1})\end{bmatrix}&n<-1
\end{array}
\right.
\]
\end{thm}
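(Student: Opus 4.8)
The plan is to split into the cases $n \ge 0$, $n = -1$, $n = -2$, and $n \le -3$, and to handle the last by downward induction. The first case is immediate: since $\beta_0 = \gr$ and $\beta_k = \gr \oplus^k \alpha$ for $k \ge 0$, the $n\ge 0$ case is precisely Equation (\ref{eq:lin_recur}) applied with $\gamma = \gr$. The cases $n = -1$ and $n = -2$ are exactly the matrix computations (\ref{eq:minus1}) and (\ref{eq:mtxcalc}) carried out just before the theorem; for $n = -1$ one additionally invokes the multiplicativity relation (\ref{eq:dmult}), $d(\alpha) = d(\beta_{-1})d(\beta_0)$, to rewrite the coefficient $d(\beta_0)/d(\alpha)$ of (\ref{eq:minus1}) as $1/d(\beta_{-1})$, matching the stated formula. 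These three cases thus require no new work.

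For $n \le -3$ I would induct downward, taking $n = -2$ as the base case (already in hand) and deducing the formula for $n-1$ from that for $n$, for every $n \le -2$. The step is to left-multiply the inductive hypothesis by $M_\alpha^{-1} = \tfrac{1}{d(\alpha)}\left[\begin{smallmatrix} \F(\alpha) & -1 \\ d(\alpha) & 0 \end{smallmatrix}\right]$. Writing $k = -n-1 \ge 1$, the hypothesis reads $M_\alpha^n \left[\begin{smallmatrix}\F(\beta_0)\\\F(\beta_1)\end{smallmatrix}\right] = \left[\begin{smallmatrix} d(\beta_{-1})^{-1}d(\alpha)^{-k}\F(\beta_n) \\ d(\beta_{-1})^{-1}d(\alpha)^{-k+1}\F(\beta_{n+1})\end{smallmatrix}\right]$. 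Applying $M_\alpha^{-1}$, the bottom entry of the product is the old top entry, which is already in the form required for index $n-1$; the top entry is $d(\alpha)^{-1}\bigl(\F(\alpha)u - v\bigr)$, where $u,v$ are the two entries above. After factoring out the common scalar, matching this to the target reduces to the single identity
\[ \F(\beta_{n-1}) = -d(\alpha)\F(\beta_{n+1}) + \F(\alpha)\F(\beta_n). \]
This is the defining recursion (\ref{eq:FRFdef}) with center $\alpha$ applied to a consecutive triple of $\partial_L(\alpha)$: for $n \le -2$ one has $\beta_{n+1} = \gl \oplus^{-n-2}\alpha$, $\beta_n = \gl\oplus^{-n-1}\alpha$, and $\beta_{n-1} = \gl\oplus^{-n}\alpha = \beta_{n+1}\oplus^2\alpha$, so $\gamma = \beta_{n+1}$ lies in $\partial(\alpha)$ and the three terms are genuinely consecutive with $\alpha$ as center. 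Tracking the powers of $d(\alpha)$ then produces exactly the coefficients $d(\beta_{-1})^{-1}d(\alpha)^{-(n-1)-1}$ and $d(\beta_{-1})^{-1}d(\alpha)^{-(n-1)-2}$ demanded for index $n-1$.

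The main obstacle, and the reason the three small cases cannot be absorbed into the induction, is the behavior at the corner. The $\alpha$-centered recursion links three consecutive terms of $\partial_L(\alpha)$, but the pair $\beta_{-1},\beta_0$ straddles the two boundary sequences; the relation (\ref{eq:beta-2}) connecting $\F(\beta_{-2})$ to $\F(\beta_0)$ and $\F(\beta_{-1})$ is centered at $\beta_{-1}$ and carries the coefficient $d(\beta_{-1})$, not $d(\alpha)$. Consequently the $\alpha$-centered recursion only first becomes available at the triple $\beta_{-3},\beta_{-2},\beta_{-1}$ (that is, at $n=-2$), which is precisely why $n=-2$ must serve as the base case and why (\ref{eq:minus1}) and (\ref{eq:mtxcalc}) are computed by hand. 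A secondary bookkeeping point is the meaning of the fractions: the entries involve $1/d(\alpha)$ and $1/d(\beta_{-1})$, so the identities are to be read in the localization of $R$ that inverts the multiplicative set generated by the image of $d$. This is legitimate exactly because that image contains no zero divisors, so $R$ embeds into the localization and the displayed equalities there are equivalent to cleared-denominator identities in $R$. The remaining effort is the routine but careful tracking of the exponents of $d(\alpha)$ across the inductive step.
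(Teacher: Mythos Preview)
Your proposal is correct and follows essentially the same route as the paper: the cases $n\ge 0$, $n=-1$, and $n=-2$ are read off from (\ref{eq:lin_recur}), (\ref{eq:minus1}), and (\ref{eq:mtxcalc}) respectively, and the remaining cases are handled by downward induction starting at $n=-2$, multiplying by $M_\alpha^{-1}$ and invoking the $\alpha$-centered recursion on $\partial_L(\alpha)$. Your additional remarks on why the corner cases cannot be absorbed into the induction and on reading the identities in a localization of $R$ are helpful elaborations, but do not alter the argument.
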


\begin{proof} For $n\geq 0$ the formula holds for all Farey Recursive Functions and hence holds in our case. For $n=-1$ and $-2$ the formulas follow from Equations (\ref{eq:minus1}) and (\ref{eq:mtxcalc}).  Now, proceed by induction. Assume the formula holds for a fixed $n\leq -2$. Then
\begin{eqnarray*}
M_\alpha^{n-1}\begin{bmatrix}\F(\beta_0)\\\F(\beta_1)\end{bmatrix} &=&M_\alpha^{-1}M_\alpha^n\begin{bmatrix}\F(\beta_0)\\\F(\beta_1)\end{bmatrix}\\ \\
&=&\frac1{d(\alpha)}\begin{bmatrix}\F(\alpha)&-1\\d(\alpha)&0\end{bmatrix}\begin{bmatrix}\frac{1}{d(\beta_{-1})d(\alpha)^{-n-1}}\F(\beta_n)\\ \frac{1}{d(\beta_{-1})d(\alpha)^{-n-2}}\F(\beta_{n+1})\end{bmatrix}\\ \\
&=&\begin{bmatrix}\frac{1}{d(\beta_{-1})d(\alpha)^{-n}}\left[\F(\alpha)\F(\beta_n)-d(\alpha)\F(\beta_{n+1})\right]\\ \frac{1}{d(\beta_{-1})d(\alpha)^{-n-1}}\F(\beta_n)\end{bmatrix}\\ \\
&=&\begin{bmatrix}\frac{1}{d(\beta_{-1})d(\alpha)^{-n}}\F(\beta_{n-1})\\ \frac{1}{d(\beta_{-1})d(\alpha)^{-n-1}}\F(\beta_{n})\end{bmatrix}.\\
\end{eqnarray*}
\end{proof}
Note that, if the roles of the left and right corners are interchanged in the discussions above, a similar result holds for the sequences obtained by reversing the sequences $\Delta(\alpha)$.

Since the constant determinant $d=1$ is multiplicative, the last corollary is immediate.
\begin{cor} \label{cor:det1}
Suppose $\F\co\whq\to R$ is a Farey Recursive Function with constant determinant one.  Given $\alpha \in \Q$, define $M_\alpha$ and $\{ \beta_j \}$ as before.  Then, for all $n \in \Z$, 
\[
M_\alpha^n \begin{bmatrix}\F(\beta_0) \\\F(\beta_1)\end{bmatrix} = 
\begin{bmatrix}\F(\beta_n)\\  \F(\beta_{n+1})\end{bmatrix}.
\]
\end{cor}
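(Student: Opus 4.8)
The plan is to invoke Theorem \ref{thm:recursion_formula} directly and specialize its conclusion to the constant determinant $d \equiv 1$. First I would verify that the constant function $d \co \whq \to R$ with $d(\alpha)=1$ for every $\alpha$ satisfies the two standing hypotheses of that theorem: it is multiplicative, since $d(\gamma \oplus \alpha) = 1 = 1 \cdot 1 = d(\gamma)\,d(\alpha)$ for every Farey pair $\gamma,\alpha$, and its image is $\{1\}$, which contains no zero divisors because $1$ is a unit. Hence $\F$ lies within the scope of Theorem \ref{thm:recursion_formula}, and its three-case formula for $M_\alpha^n \left[\begin{smallmatrix} \F(\beta_0) \\ \F(\beta_1) \end{smallmatrix}\right]$ applies verbatim.

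Next I would substitute $d(\alpha)=1$ and $d(\beta_{-1})=1$ into each of the three cases. For $n\geq 0$ the stated output is already $\left[\begin{smallmatrix} \F(\beta_n) \\ \F(\beta_{n+1}) \end{smallmatrix}\right]$, so there is nothing to check. For $n=-1$ the prefactor $1/d(\beta_{-1})$ equals $1$, so the output $\left[\begin{smallmatrix} \frac{1}{d(\beta_{-1})}\F(\beta_{-1}) \\ \F(\beta_0) \end{smallmatrix}\right]$ collapses to $\left[\begin{smallmatrix} \F(\beta_{-1}) \\ \F(\beta_0) \end{smallmatrix}\right]$, which is precisely $\left[\begin{smallmatrix} \F(\beta_n) \\ \F(\beta_{n+1}) \end{smallmatrix}\right]$ evaluated at $n=-1$. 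For $n<-1$, every denominator appearing in the theorem is a product of powers of $d(\beta_{-1})$ and $d(\alpha)$, each equal to $1$, so all scalar prefactors are $1$ and the output again reduces to $\left[\begin{smallmatrix} \F(\beta_n) \\ \F(\beta_{n+1}) \end{smallmatrix}\right]$. Combining the three cases yields the uniform identity for all $n \in \Z$, as claimed.

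I expect essentially no obstacle here: the only genuine content is the routine hypothesis check above, after which the result is the observation that setting $d$ identically equal to $1$ erases every correction factor that distinguishes the negative-index cases of Theorem \ref{thm:recursion_formula} from the naive recursion $M_\alpha^n \left[\begin{smallmatrix} \F(\beta_0) \\ \F(\beta_1) \end{smallmatrix}\right] = \left[\begin{smallmatrix} \F(\beta_n) \\ \F(\beta_{n+1}) \end{smallmatrix}\right]$. Put differently, constant determinant one is exactly the condition under which the wrapping of $\F$ around the boundary triangle $\Delta(\alpha)$ is clean in both directions, matching the behavior of the classical bi-infinite Fibonacci sequence discussed at the start of this section.
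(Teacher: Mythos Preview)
Your proposal is correct and mirrors the paper's own reasoning: the paper simply observes that the constant determinant $d=1$ is multiplicative, so Theorem \ref{thm:recursion_formula} applies and all the $d$-factors collapse to $1$. Your write-up just makes the hypothesis check and the case-by-case specialization explicit, which is fine but not strictly necessary.
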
 

In particular, since the Farey Recursive Functions in Examples \ref{ex:Fib} and \ref{ex:U} have constant determinant 1, the functions wrap both ways around all triangles in the Stern-Brocot Diagram.

\bibliographystyle{plain}
\bibliography{FRF}

\end{document}